\newtheorem{thm}{Theorem}[section]
\newtheorem{assum}[thm]{Assumption}
\newtheorem{prop}[thm]{Proposition}
\newtheorem{defn}[thm]{Definition}
\newtheorem{rem}[thm]{Remark}
\newtheorem{lem}[thm]{Lemma}
\DeclareMathOperator{\rank}{rank}
\newcommand{\finproof}{\hfill\small$\blacksquare$}
\title{Stabilization of a Chain of Three Hyperbolic PDEs using a Time-Delay Representation\thanks{This project received funding from the Agence Nationale de la Recherche via grant PANOPLY ANR-23-CE48-0001-01.}}
\author[1]{Adam Braun}
\author[1]{Jean Auriol}
\author[1]{Lucas Brivadis}
\affil[1]{Université Paris-Saclay, CNRS, CentraleSupélec, Laboratoire des Signaux et Systèmes, 91190, Gif-sur-Yvette, France. Emails:
        {\tt\small adam.braun@centralesupelec.fr; jean.auriol@centralesupelec.fr; lucas.brivadis@centralesupelec.fr}}%
\begin{document}

\maketitle

% \begin{frontmatter}

% Title, preferably not more than 10 words.

% \thanks[footnoteinfo]{This project received funding from the Agence Nationale de la Recherche via grant PANOPLY ANR-23-CE48-0001-01.}

% \author[First]{Adam Braun} 
% \author[First]{Jean Auriol} 
% \author[First]{Lucas Brivadis}

% \address[First]{Université Paris-Saclay, CNRS, CentraleSupelec, Laboratoire des Sygnaux et Systèmes (L2S) 91190, Gif-sur-Yvette, France. E-mails: adam.braun@centralesupelec.fr; jean.auriol@centralesupelec.fr; lucas.brivadis@centralesupelec.fr}
% \address[Second]{Université Paris-Saclay, CNRS, CentraleSupelec, Laboratoire des Sygnaux et Systèmes (L2S) 91190, Gif-sur-Yvette, France.(e-mail: jean.auriol@l2s.centralesupelec.fr)}
% \address[Third]{Université Paris-Saclay, CNRS, CentraleSupelec, Laboratoire des Sygnaux et Systèmes (L2S) 91190, Gif-sur-Yvette, France.(e-mail: lucas.brivadis@l2s.centralesupelec.fr)}

\begin{abstract}                % Abstract of 50--100 words

% \lb{Pas mal du tout, mais on peut faire encore un peu mieux je pense. À discuter ensemble.}

This paper addresses the stabilization of a chain system consisting of three hyperbolic Partial Differential Equations (PDEs). The system is reformulated into a pure transport system of equations via an invertible backstepping transformation. Using the method of characteristics and exploiting the inherent cascade structure of the chain, the stabilization problem is reduced to that of an associated Integral Difference Equation (IDE).
% \lb{Ok jusque là. Après c'est moins bien je trouve.}
A dynamic controller is designed for the IDE, whose gains are computed by solving a system of Fredholm-type integral equations.
This approach provides a systematic framework for achieving exponential stabilization of the chain of hyperbolic PDEs.
\end{abstract}

\textbf{Keywords.} Networked Systems, PDE, IDE, Stabilization

% \end{frontmatter}
%===============================================================================

\section{Introduction}
The controllability and stabilization of networks of hyperbolic Partial Differential Equations (PDEs) represent a fundamental area of research with broad applications in engineering and science. Hyperbolic PDEs govern systems where wave propagation and transport phenomena are predominant, such as fluid dynamics~\citep{HayatShang2021}, electrical networks \citep{Arrillaga1998}, and mechanical vibrations~\citep{BastinCoron2016,auriol2022comparing,yu2023traffic}. In most applications, these systems are often interconnected in a network structure, where the dynamic behavior of each subsystem is influenced by its neighbors through boundary interactions. Given a control input acting on the boundary of one subsystem in the network, the stabilization of the entire network is a problem of significant interest in various fields. For instance, it arises in the context of traffic flows where ramp metering systems can be used to control freeway interconnections~\citep{Yu2022}.

%\ja{Ce paragraphe a besoin d'être étoffé en citant les autres auteurs qui ont travaillé sur le sujet (Deutscher, Aamo par exemple). Actuellement, c'est surtout centré autour de mes papiers. Je t'invite à regarder mon intro du papier Observer design for n + m linear hyperbolic ODE-PDE-ODE systems. Après attention à ne pas trop se focaliser sur les systèmes avec ODE pusiqu'on n'en a pas ici ! N'hésite pas à reprendre des refs issues du papier de Jeanne (Stabilizing output-feedback control law for hyperbolic systems using a Fredholm transformation). Tu trouveras aussi dans son intro des motivations sur pourquoi on regarde ce problème. Bref, le message à faire passer dans ce paragraphe est qu'il y a plusieurs approches développées pour des chaines (éventuellement avec des ODEs), qu'elles utilisent pour la plupart le backstepping, MAIS qu'elles supposent en général le contrôle à l'extrémité de la chaine. Le papier de Jeanne a en partie surmonté le problème à l'aide d'une transfo de Fredholm. Ensuite parler de l'approche alternative de mon HDR. }

There has been significant attention given to cascaded interconnections of hyperbolic PDE systems, possibly coupled with Ordinary Differential Equations (ODEs) in ODE-PDE-ODE configurations, and most existing constructive control strategies being based on the backstepping approach as evidenced in~\citep{deutscher2021backstepping,Deutscher2018,wang2020delay,irscheid2023output,auriol2023robustification}. A recent approach based on infinite dimensional dynamics extension for the backstepping control of hyperbolic PDE systems has been developed in~\citep{GEHRING2025112032}. The backstepping approach has recently proven effective in designing stabilizing controllers for interconnections of hyperbolic PDEs with chain structures.  For such configurations, backstepping-based controllers were initially developed for interconnected scalar subsystems~\citep{deutscher2021backstepping,auriol2020output,Redaud2021}. Extensions to non-scalar subsystems have been proposed in~\citep{auriol2024output}. Interestingly, in most contributions for such interconnected configurations, the design of a stabilizing control law relies on reformulating the interconnection as a \emph{time-delay system}. However, these results also usually assume that control input is positioned at one end of the chain. Although such a configuration covers a wide range of applications, such as drilling pipes or UAV-cable-payload structures, there are several situations in which the actuator is located at an arbitrary node of the chain. For instance, when developing traffic control strategies on vast road networks, the actuator (ramp metering) can be located at a crossroad. In the case of two coupled hyperbolic PDEs with a control input applied at the junction, a stabilizing control law was proposed in~\citep{Redaud2022} using Fredholm transformations. While this approach is well-suited for the two-PDE configuration, it exhibits certain limitations when extended to chains comprising more than two PDEs. More recently, a novel method addressing some of these limitations was developed in~\citep[Chapter 9]{AuriolHDR} and~\citep{AuriolLCSS}. Using backstepping transforms, the system is rewritten as an Integral Difference Equation, and the control law is designed in terms of integrals that depend on the state and input histories.
% The boundary stabilization of an ODE-PDE-ODE chain with a control input applied at one end was comprehensively examined in~\citep{Deutscher2018} through the use of backstepping methods. Furthermore, the stabilization of chains comprising multiple hyperbolic PDEs, potentially coupled with ordinary differential equations, has been investigated also in the case where the control input is positioned at one end of the chain~\citep{Redaud2021}. 
% Additionally, the case of two coupled hyperbolic PDEs with a control input applied between them was examined in~\citep{Redaud2022} using exclusively backstepping transformations. While this approach is well-suited for the two-PDE configuration, it exhibits certain limitations when extended to chains comprising more than two PDEs. More recently, a novel method addressing some of these limitations was developed in~\citep[Chapter 9]{AuriolHDR}, specifically for the case of a two-PDE chain. Using backstepping transforms, the system is rewrittend as an IDE and an original control law is designed in terms of integrals that depend the state and input histories.
%Using Volterra transforms, the original system is mapped into a pure transport system. Then, using characteristics, the problem is transformed into the stabilization of an Integral Difference Equation (IDE) with  point-wise and  distributed delays in the state and controller. The control law is designed in terms of integrals that depend the state and input histories. The kernel functions are computed by solving a set of Fredholm-type integral equations.

The present paper addresses the stabilization of three scalar hyperbolic PDE systems interconnected in a chain structure. The control input is located between the first and second systems. The method previously developed in \citep[Chapter 9]{AuriolHDR} cannot be directly applied, rendering the case of three PDEs not a straightforward extension of the two-PDE case.
This work represents a crucial step towards the stabilization of a chain composed of $N$ hyperbolic PDEs with an input located at different possible node of the chain.
Inspired by the previous contributions on the stabilization of two interconnected systems~\citep[Chapter 9]{AuriolHDR},~\citep{Redaud2022} actuated at the junction, we first apply a backstepping transformation %\ja{J'ai enlevé Volterra car on a en fait Volterra + affine. J'ai plutôt dis que c'était inversible}.
% \lb{La suite de ce paragraphe n'est pas claire je trouve. Peux-tu essayer de reformuler stp ?
% Il faut peut-être rester un peu plus général. Là, tu essaie de donner des détails, qui rendent les choses pas claires car tu n'as pas encore toutes les définitions requises. Exemple: "we use the characteristics to get multiple IDEs" ?
% }
This enables us to map the original system onto an intermediate target system composed of transport equations with integral couplings at their boundaries. The stability properties of this target system are equivalent to those of an integral difference equation (IDE) incorporating both pointwise and distributed delays. The stabilization of this IDE will be achieved through an adaptation of the method proposed in~\citep{AuriolLCSS}. The control law will have an auto-regressive structure~\citep{ABNT-IJRNC-2024}. In particular, the control input gains are derived from the solutions of Fredholm equations. 
%\ja{Dire que du coup, on va stabiliser ce système IDE à l'aide d'une méthode adaptée de celle de mon papier. Le contrôleur aura alors une structure auto-regressive~\citep{ABNT-IJRNC-2024}. In particular, the control input gains are derived from the solutions of Fredholm equations. }The resulting control law exhibits the same structural form as in \citep{AuriolLCSS}.
% However, we also add affine terms to that Volterra transform in order to map the system into an intermediate pure transport system. Afterwards, we use the method of characteristics to get multiple IDEs.
% Taking into account the inherent cascade structure of the chain, the stabilization of the initial system is equivalent to the stabilization of an IDE with multiple states delays, a distributed delay for the state and the input, and one point-wise delay for the input \lb{trop de détails dans cette énumération je trouve}.
% The resulting control law has the same structure as in \citep{AuriolLCSS}.

 The structure of this article follows the outlined strategy. In Section~\ref{Problem formulation}, we define the problem and present our assumptions. Section~\ref{Time-Delay representation} details the steps we take to reformulate the problem as the stabilization of an IDE with multiple state delays. Finally, in Section~\ref{controller design}, we design a stabilizing state-feedback law for the IDE. 
 \section{Problem formulation}
\label{Problem formulation}
\subsection{System under consideration}
Let $L^2(0, 1)$ be the space of real valued square integrable functions over $(0, 1)$,
and let $H^1(0, 1)=\{f\in L^2(0, 1): f'\in L^2(0, 1)\}$.
In this paper, we consider a chain of three subsystems interconnected through their boundaries as schematically represented in Figure~\ref{system_draw}.
The control input~$U$ is located at the boundary between the first and the second subsystem and takes real values. Every subsystem $i$ is represented by a set of PDEs. For  all $i\in \{1,2,3\},$ the states $u_i, v_i$ verify
\begin{equation}
\label{system_edp_origin}
\begin{aligned} 
    &\partial_t u_i(t,x) + \lambda_i \partial_x u_i(t,x) = \sigma^{+}_i(x) v_i(t,x),\\
     &\partial_t v_i(t,x) - \mu_i \partial_x v_i(t,x) = \sigma^{-}_i(x) u_i(t,x),\\
&u_i(t,0) = q_{ii}v_{i}(t,0) +q_{i i-1}u_{i-1}(t,1)\\
&v_i(t,1) =\mathds{1}_{i=1}U(t) +\rho_{ii}u_i(t,1) + \rho_{ii+1}v_{i+1}(t,0)
\end{aligned}
\end{equation}
with $q_{10} = \rho_{34} = 0$, $t\geq 0$ and $x\in [0, 1]$. The velocities $\lambda_i$ and $\mu_i$ are positive real constants. The in-domain coupling terms $\sigma^{+}_i$ and $\sigma^{-}_i$ are continuous functions defined on $[0, 1]$ with real values.
The boundary coupling coefficients $q_{ij}$, $\rho_{ij}$
are real constants. Finally, we define $\tau_i$ as the total transport time associated with the subsystem $i$: \begin{equation}\label{tau_i def}\tau_i = \frac{1}{\lambda_i} + \frac{1}{\mu_i}.\end{equation}
  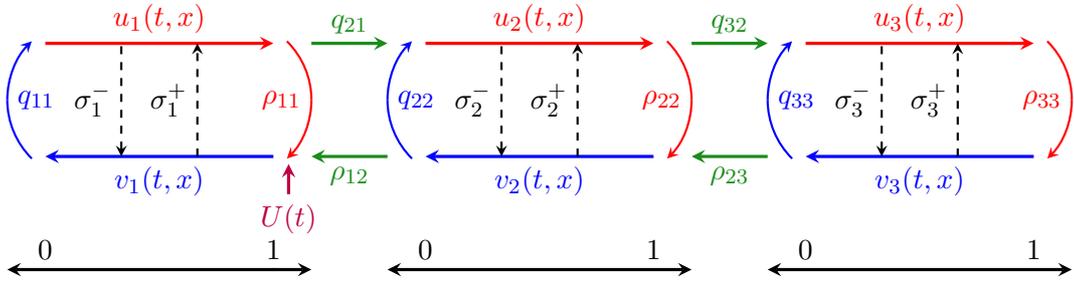
\begin{figure}[htb]%

\begin{center}

 \scalebox{1}{

\begin{tikzpicture}

% Subsystem 1

%PDE

\draw [>=stealth,->,red,very thick] (0,0) -- (3,0);

\draw [red] (1.5,0) node[above]{$u_1(t,x)$};

\draw [>=stealth,<-,blue,very thick] (0,-1.5) -- (3,-1.5);

\draw [blue] (1.5,-1.5) node[below]{$v_1(t,x)$};

%In-domain couplings 

\draw [>=stealth,<-,dashed, thick] (1,-1.5) -- (1,0);

\draw(1,-0.75) node[left]{$\sigma^{-}_1$};

\draw [>=stealth,->,dashed, thick] (2,-1.5) -- (2,0);

\draw(2,-0.75) node[left]{$\sigma^{+}_1$};

%B.C

\draw [blue,>=stealth, thick](-0.5,-0.75) arc (-180:-135:1.1);

\draw [blue,>=stealth,->, thick](-0.5,-0.75) arc (-180:-225:1.1);

\draw [blue] (-0.5,-0.75) node[right]{$q_{11}$};
\draw [>=stealth,<-,purple,very thick] (3.2,-1.6) -- (3.2,-2);

\draw [purple] (3.2,-2) node[below]{$U(t)$};

\draw [red,>=stealth, thick](3.5,-0.75) arc (0:45:1.1);

\draw [red,>=stealth,->, thick](3.5,-0.75) arc (0:-45:1.1);

\draw [red] (3.5,-0.75) node[left]{$\rho_{11}$};

%B.C between sub 1 and 2

\draw [>=stealth,->,green!50!black!90,very thick] (3.5,0) -- (4.5,0);

\draw [color=green!50!black!90] (4,0) node[above]{$q_{21}$};

\draw [>=stealth,<-,green!50!black!90,very thick] (3.5,-1.5) -- (4.5,-1.5);

\draw [color=green!50!black!90] (4,-1.5) node[below]{$\rho_{12}$};

%% Subsystem 2

% (u_2,v_2) system

%PDE

\draw [>=stealth,->,red,very thick] (5,0) -- (8,0);

\draw [red] (6.5,0) node[above]{$u_{2}(t,x)$};

\draw [>=stealth,<-,blue,very thick] (5,-1.5) -- (8,-1.5);

\draw [blue] (6.5,-1.5) node[below]{$v_{2}(t,x)$};

%In-domain couplings 

\draw [>=stealth,<-,dashed, thick] (6,-1.5) -- (6,0);

\draw(6,-0.75) node[left]{$\sigma^{-}_{2}$};

\draw [>=stealth,->,dashed, thick] (7,-1.5) -- (7,0);

\draw(7,-0.75) node[left]{$\sigma^{+}_{2}$};

%B.C

\draw [blue,>=stealth, thick](4.5,-0.75) arc (-180:-135:1.1);

\draw [blue,>=stealth,->, thick](4.5,-0.75) arc (-180:-225:1.1);
\draw [red] (8.5,-0.75) node[left]{$\rho_{22}$};
\draw [blue] (4.5,-0.75) node[right]{$q_{22}$};

%\draw [blue] (7.5,-0.75) node[right]{$q_{n-1,n-1}$};

\draw [red,>=stealth, thick](8.5,-0.75) arc (0:45:1.1);

\draw [red,>=stealth,->, thick](8.5,-0.75) arc (0:-45:1.1);

%\draw [red] (11.5,-0.75) node[left]{$\rho_{n-1,n-1}$};

%B.C between sub (n) and (n-1)

\draw [>=stealth,->,green!50!black!90,very thick] (8.5,0) -- (9.5,0);

\draw [color=green!50!black!90] (9,0) node[above]{$q_{32}$};

\draw [>=stealth,<-,green!50!black!90,very thick] (8.5,-1.5) -- (9.5,-1.5);

\draw [color=green!50!black!90] (9,-1.5) node[below]{$\rho_{23}$};

%%Subsystem 3

%PDe

\draw [>=stealth,->,red,very thick] (10,0) -- (13,0);

\draw [red] (11.5,0) node[above]{$u_3(t,x)$};

\draw [>=stealth,<-,blue,very thick] (10,-1.5) -- (13,-1.5);

\draw [blue] (11.5,-1.5) node[below]{$v_3(t,x)$};

%In-domain couplings 

\draw [>=stealth,<-,dashed, thick] (11,-1.5) -- (11,0);

\draw(11,-0.75) node[left]{$\sigma^{-}_3$};

\draw [>=stealth,->,dashed, thick] (12,-1.5) -- (12,0);

\draw(12,-0.75) node[left]{$\sigma^{+}_3$};

%B.C

\draw [blue,>=stealth, thick](9.5,-0.75) arc (-180:-135:1.1);

\draw [blue,>=stealth,->, thick](9.5,-0.75) arc (-180:-225:1.1);

\draw [blue] (9.5,-0.75) node[right]{$q_{33}$};

\draw [red,>=stealth, thick](13.5,-0.75) arc (0:45:1.1);

\draw [red,>=stealth,->, thick](13.5,-0.75) arc (0:-45:1.1);

\draw [red] (13.5,-0.75) node[left]{$\rho_{33}$};

%% x-axis

\draw [>=stealth,<->,very thick] (4.5,-3) -- (8.5,-3);

\draw [>=stealth,<->,very thick] (9.5,-3) -- (13.5,-3);

\draw (8,-3) node[above]{1};

\draw (10,-3) node[above]{0};

\draw (5,-3) node[above]{0};

\draw [very thick] (3,-3) -- (3,-3);

\draw (3,-3) node[above]{1};

\draw (13,-3) node[above]{1};

\draw [>=stealth,<->,very thick] (-0.5,-3) -- (3.5,-3);

\draw (0,-3) node[above]{0};

%\draw (17.5,-2.4) node[above]{$x$};

\end{tikzpicture}}
\caption{Schematic representation of system~\eqref{system_edp_origin}}
\label{system_draw}
\end{center}

\end{figure}

% \lb{The proof of the well-posedness of the open-loop system \eqref{system_edp_origin} can be found in \citep[Appendix A]{BastinCoron2016} or \citep[Theorem 5.2.1]{AuriolHDR}.}
% \ja{Je ne suis pas sûr qu'il soit nécessaire de redonner le théorème. La phrase rajoutée par Lucas est suffisante à condition d'introduire avant les conditions initiales.}
% \lb{Personnellement je préfère éviter d'introduire les CI avant, d'où le théorème. D'ailleurs, pourquoi prendre les CI régulières (H1) ici ?}

According to \citep[Appendix A]{BastinCoron2016}, the open-loop system \eqref{system_edp_origin} is well-posed: for any input $U\in C^0([0,+\infty), \mathbb R)$ and any initial condition $(u^0,v^0)\in L^2([0,1], \mathbb{R}^3)\times L^2([0,1], \mathbb{R}^3)$, it admits a unique corresponding solution in  $C^0([0,+\infty), L^2([0,1], \mathbb{R}^3))\times C^0([0,+\infty), L^2([0,1], \mathbb{R}^3))$.

% The proof of the existence of the solution in $H^1$ for the open-loop system ($U\equiv 0$) in the sens of the $L^2$-norm can be found in \citep[Appendix A]{BastinCoron2016} or \citep[Theorem 5.2.1]{AuriolHDR}.
% \begin{thm}\citep[Appendix A, Theorem A.1]{BastinCoron2016}.
%     For all \lb{$U\in C^0([0,+\infty), \mathbb R)$}
%     and all initial conditions \lb{$(u_0,v_0)\in L^2([0,1], \mathbb{R}^3)$}
%     there exist a unique corresponding solution in 
%     \lb{\sout{$C^1([0,+\infty), L^2([0,1], \mathbb{R}^3))\cap$} $C^0([0,+\infty), L^2([0,1], \mathbb{R}^3))$} to system~\eqref{system_edp_origin}.
% \end{thm}
In this paper, we want to exponentially stabilize~\eqref{system_edp_origin} in the sense of the $L^2$-norm. More precisely, we recall the following definition of exponential stability.
  % \begin{defn} System~\eqref{system_edp_origin} with $U\equiv 0$ is exponentially stable if:
  %       $\exists K$ and $\omega >0$ such that: $\forall (u_0, v_0)\in L^2([0,1],\mathbb{R}^3)\times L^2([0,1],\mathbb{R}^3) $ :
  %       $\|(u(t,.), v(t,.))\|_{L^2} \leq K e^{-\omega t}\|(u_0, v_0)\|_{L^2} $, $\forall t \geq 0$.
  %   \end{defn}
\begin{defn} System~\eqref{system_edp_origin} with $U\equiv0$ is exponentially stable if there exist $\omega>0$ and $K\geq 1$ such that, for all $(u_0, v_0)\in L^2([0,1],\mathbb{R}^3)\times L^2([0,1],\mathbb{R}^3)$, the unique solution to \eqref{system_edp_origin} is such that, for all $t\geq 0$,
        $\|(u(t), v(t))\|_{L^2} \leq K e^{-\omega t}\|(u_0, v_0)\|_{L^2} $.
    \end{defn}
    % \ja{Pas besoin de préciser $U\equiv 0$. La définition est la même en B.F.}
    % \lb{Ah, c'est moi qui l'ai ajouté ;)
    % Le système (1) est en boucle ouverte, ça n'a pas de sens pour moi de dire qu'il est stable sans préciser $U\equiv0$.
    % }
    % \ja{J'entends, mais la définition de stabilité exp. reste la même quand U est un feedback et (u,v), non ?}
    % \lb{Oui mais dans ce cas c'est pas "System (1)", c'est par exemple System (1)-(13).}
\subsection{Structural Assumptions} 
\label{assumptions}
The following assumptions are necessary to design a stabilizing control law.
\begin{assum} \label{Assum_stab_OL}
If $\sigma^{+}_i$ and $\sigma^{-}_i$ are equal to zero for all $i\in \{1,2,3\}$,
then
system~\eqref{system_edp_origin} with $U\equiv 0$ is exponentially stable.
\end{assum}
This assumption is analogous to the one stated in~\citep[Assumption 5]{Redaud2021} and relates to a (delay-) robustness condition. More precisely, without this condition, it can be shown~\citep{AuriolHDR} that the open-loop system has an infinite number of unstable poles. This would prevent the design of stabilizing feedback control law robust to input delays~\citep{logemann1996conditions}.
As Assumption~\ref{Assum_stab_OL} implies that the boundary coupling terms are dissipative, the presence of the in-domain coupling terms is the main difficulty for the stabilization of~\eqref{system_edp_origin}.
\begin{assum} \label{non_zero_coeff_nes}
The boundary coupling coefficients $q_{21}$, $q_{32}$ 
are non-zero.
\end{assum}
This assumption guarantees that the control input
can act on the second and third subsystems. Obviously Assumption~\ref{non_zero_coeff_nes} is not necessary if the two last subsystems are already exponentially stable. However, this configuration is of minor interest.
\begin{assum}\label{non_zero_coeff_non_nes}
The boundary coupling coefficient $q_{11}$ is non-zero.
\end{assum}
% This assumption is more restrictive. When \( q_{11} = 0 \), the control law influences the second subsystem solely through the in-domain coupling term \( \sigma_1^+ \). This is a current limitation of our approach.
% This assumption is more restrictive.
When \( q_{11} = 0 \), the control law influences the second subsystem solely through the in-domain coupling term \( \sigma_1^+ \). Assumption~\ref{non_zero_coeff_non_nes} induces some conservatism but is currently required by the approach of the present article, which exploits the action of the control through the boundary couplings.
% \lb{Il faut dire un peu plus/mieux je pense:}
To these three assumptions, an additional controllability condition will be introduced in Section 4, as further background is required to formally state it.
% A spectral controllability assumption will be introduced later in Section 4.
\section{Time-Delay Representation}
\label{Time-Delay representation}
To achieve exponential stabilization, we first use a backstepping transformation to map the system~\eqref{system_edp_origin} to an intermediate target system designed to eliminate the in-domain coupling. Subsequently, the system is reduced to a set of transport equations, the in-domain coupling terms having been replaced by integral terms acting at the boundaries of the different subsystems. 
\subsection{Intermediate target system}
Let us consider the backstepping transformation
$\mathcal{F}$ that maps $(u_i, v_i)_{1\leq i\leq 3}\in H^1([0,1],\mathbb{R}^6)$ to $(\alpha_i, 
\beta_i)_{1\leq i\leq 3}\in H^1([0,1],\mathbb{R}^6)$ and defined for all $x\in [0,1]$ by:
\begin{align}
    % \mathcal{\alpha}_1(t,x) &= u_1(t,x) -\int_0^x K_1^{uu}u_1(t) + K_1^{uv} v_1(t) dy
    \mathcal{\alpha}_1(x) &= u_1(x)
    -\int_0^x \begin{pmatrix}
        K_1^{uu}(x, y)\\ K_1^{uv} (x, y)
    \end{pmatrix}^\top\begin{pmatrix}
        u_1(y)\\ v_1(y) 
    \end{pmatrix}dy 
    \label{alpha_1},\\
    \beta_1(x) &= v_1(x)  -\int_0^x \begin{pmatrix}
        K_1^{vu}(x, y)\\ K_1^{vv} (x, y)
    \end{pmatrix}^\top\begin{pmatrix}
        u_1(y)\\ v_1(y) 
    \end{pmatrix}dy, \label{beta_1}
\\
    \mathcal{\alpha}_2(x) &= u_2(x) -\int_x^1 \begin{pmatrix}
        K_2^{uu}(x, y)\\ K_2^{uv} (x, y)
    \end{pmatrix}^\top\begin{pmatrix}
        u_2(y)\\ v_2(y) 
    \end{pmatrix}dy\label{alpha_2}-\int_0^1 \begin{pmatrix}Q^{\mathcal{\alpha}}(x,y)\\R^{\mathcal{\alpha}}(x,y) \end{pmatrix}^\top \begin{pmatrix}
        u_3(y)\\
        v_3(y)
    \end{pmatrix}dy, \\
    \beta_2(x) &= v_2(x)  -\int_x^1 \begin{pmatrix}
        K_2^{vu}(x, y)\\ K_2^{vv} (x, y)
    \end{pmatrix}^\top\begin{pmatrix}
        u_2(y)\\ v_2(y) 
    \end{pmatrix}dy \label{beta_2} -\int_0^1 \begin{pmatrix} Q^{\beta}(x,y)\\R^{\beta}(x,y) \end{pmatrix}^\top \begin{pmatrix}
        u_3(y)\\
        v_3(y)
    \end{pmatrix}dy,
\\ 
    \mathcal{\alpha}_3(x) &= u_3(x) -\int_x^1 \begin{pmatrix}
        K_3^{uu}(x, y)\\ K_3^{uv} (x, y)
    \end{pmatrix}^\top\begin{pmatrix}
        u_3(y)\\ v_3(y) 
    \end{pmatrix}dy \label{alpha_3},\\
    \beta_3(x) &= v_3(x)  -\int_x^1 \begin{pmatrix}
        K_3^{vu}(x, y)\\ K_3^{vv} (x, y)
    \end{pmatrix}^\top\begin{pmatrix}
        u_3(y)\\ v_3(y) 
    \end{pmatrix}dy, \label{beta_3}
\end{align}
where $K_i^{uu}$, $K_i^{uv}$, $K_i^{vu}$, $K_i^{vv}$, $Q^{\alpha}$, $Q^{\beta}$, $R^{\alpha}$ and $R^{\beta}$
are piecewise continuous kernels to be defined, and
${}^\top$ denotes the transpose operator.
The map $\mathcal{F}$ is known to be continuously invertible as a combination of Volterra and affine transforms (see, e.g., \citep{Yoshida1960}). Moreover, its inverse $\mathcal{F}^{-1}:(\mathcal{\alpha}_i, 
\beta_i)_{1\leq i\leq 3}\mapsto (u_i, v_i)_{1\leq i\leq 3}$ takes the form:
\begin{align*}
    u_1(x) &= \mathcal{\alpha}_1(x) -\int_0^x \begin{pmatrix}L_1^{\alpha \alpha}(x,y)\\L_1^{\alpha \beta}(x,y) \end{pmatrix}^\top \begin{pmatrix}
        \mathcal{\alpha}_1(y)\\
        \beta_1(y)
    \end{pmatrix} dy,\\
    v_1(x)& = \beta_1(x)  -\int_0^x \begin{pmatrix}L_1^{\beta \alpha}(x,y)\\L_1^{\beta \beta}(x,y) \end{pmatrix}^\top \begin{pmatrix}
        \mathcal{\alpha}_1(y)\\
        \beta_1(y)
    \end{pmatrix}dy,\\
    u_2(x)& = \mathcal{\alpha}_2(x) -\int_x^1  \begin{pmatrix}L_2^{\alpha \alpha}(x,y)\\L_2^{\alpha \beta}(x,y) \end{pmatrix}^\top \begin{pmatrix}
        \mathcal{\alpha}_2(y)\\
        \beta_2(y)
    \end{pmatrix} dy-\int_0^1\begin{pmatrix}
        S^{u}(x,y)\\T^{u}(x,y)
    \end{pmatrix}^T \begin{pmatrix}
        \mathcal{\alpha}_3(y)\\
        \beta_3(y)
    \end{pmatrix}dy,\\
    v_2(x) &= \beta_2(x)  -\int_x^1 \begin{pmatrix}L_2^{\beta \alpha}(x,y)\\L_2^{\beta \beta}(x,y) \end{pmatrix}^\top \begin{pmatrix}
        \mathcal{\alpha}_2(y)\\
        \beta_2(y)
    \end{pmatrix}dy-\int_0^1 \begin{pmatrix}
        S^{v}(x,y)\\T^{v}(x,y)
    \end{pmatrix}^T \begin{pmatrix}
        \mathcal{\alpha}_3(y)\\
        \beta_3(y)
    \end{pmatrix}dy,\\
    u_3(x) &= \mathcal{\alpha}_3(x) -\int_x^1 \begin{pmatrix}L_3^{\alpha \alpha}(x,y)\\L_3^{\alpha \beta}(x,y) \end{pmatrix}^\top \begin{pmatrix}
        \mathcal{\alpha}_3(y)\\
        \beta_3(y)
    \end{pmatrix} dy,\\
    v_3(x)&= \beta_3(x)  -\int_x^1 \begin{pmatrix}L_3^{\beta \alpha}(x,y)\\L_3^{\beta \beta}(x,y) \end{pmatrix}^\top \begin{pmatrix}
        \mathcal{\alpha}_3(y)\\
        \beta_3(y)
    \end{pmatrix}dy,
\end{align*}
for some piecewise continuous kernels $L_i^{\cdot \cdot}$, $S^{u}$, $S^{v}$, $T^{u}$ and $T^{v}$.
Choosing the kernels according to the equations given in Appendix~\ref{Backstepping Kernels}, one can readily show that $\mathcal F$ maps the original system \eqref{system_edp_origin} into the following system composed of pure transport equations:
\begin{equation}\label{transport_intermediate_system}
\begin{aligned}
    \partial_t \alpha_i(t,x) + \lambda_i \partial_x \alpha_i(t,x) &= 0, \\
    \partial_t \beta_i(t,x) - \mu_i \partial_x \beta_i(t,x) &=0,
\end{aligned}
\quad \forall i\in \{1,2,3\}.
\end{equation}
Moreover, we obtain the associated boundary conditions: 
\begin{equation}
\begin{aligned}
 \alpha_1(t,0) &= q_{11}\beta_1(t,0),\\
 \beta_1(t,1) &= \bar{U}(t),\\
\alpha_2(t,0) &= q_{22}\beta_2(t,0) + q_{21}\alpha_1(t,1) + \int_0^1 \sum_{i=1}^3 \alpha_i(t,y) P_{2i}(y)+\beta_i(t,y)W_{2i}dy, \\
 \beta_2(t,1) &= \rho_{22}\alpha_2(t,1) + \rho_{23}\beta_3(t,0)+ \int_0^1 \alpha_3(t,y) J_{23}(y)+\beta_3(t,y)C_{23}dy, \\
 \alpha_3(t,0)&= q_{32} \alpha_2(t,1) + q_{33}\beta_3(t,0) + \int_0^1 \alpha_3(t,y)P_{33}(y) + \beta_3(t,y)W_{33}(y)dy,
 \\
 \beta_3(t,1) &= \rho_{33}\alpha_3(t,1),
\end{aligned}\label{boundary_cond_target_system}
\end{equation}
where we have replaced $U(t)$ with a new control input $\bar U(t)$ by choosing
\begin{align*}
\bar{U}(t) &=U(t)-\rho_{12}\beta_2(t,0) + \rho_{11}\alpha_1(t,1)-\int_0^1 \sum_{i=1}^3 \alpha_i(t,y) J_{1i}(y)+\beta_i(t,y)C_{1i}dy,
 \end{align*}

and
\begin{align*}
 P_{21}(y)&=-q_{21}L_1^{\alpha \alpha}(1,y),~W_{21}(y) = -q_{21} L_1^{\alpha \beta}(1,y),\\
 P_{22}(y)&=L_2^{\alpha \alpha}(0,y) -q_{22}L_2^{\beta \alpha}(0,y),\\
  P_{23}(y) &=-q_{22}S^v(0,y) + S^u(0,y), \\
    W_{23}(y)&=-q_{22}T^v(0,y) + T^u(0,y),\\
   W_{22}(y)&=L_2^{\alpha \beta}(0,y)dy-q_{22}L_2^{\beta \beta}(0,y),\\
    P_{33}(y) &= -q_{32}S^u(1,y) - q_{33}L_3^{\beta \alpha}(0,y)+L_3^{\alpha \alpha}(0,y),\\
    W_{33}(y) &=-q_{32}T^u(1,y) -q_{33}L_3^{\beta \beta}(0,y) + L_3^{\alpha \beta}(0,y),\\
    J_{11}(y)&= L_1^{\beta \alpha}(1,y) -\rho_{11}L_1^{\alpha \alpha}(1,y),\\
    C_{11}(y)&= L_1^{\beta \beta}(1,y) -\rho_{11}L_1^{\alpha \beta}(1,y),\\
    J_{12}(y)&=-\rho_{12}L_2^{\beta \alpha}(0,y) ,~C_{12}(y)= -\rho_{12}L_2^{\beta \beta}(1,y),\\
     J_{13}(y)&= -\rho_{12}S^{v}(0,y),~C_{13}(y)= -\rho_{12}T^{v}(0,y))\\
    J_{23}(y)&=S^v(1,y) - \rho_{23} L_3^{\beta \alpha}(0,y) -\rho_{22} S^u(1,y),\\
    C_{23}(y)&=T^v(1,y) - \rho_{23}L_3^{\beta \beta}(0,y) -\rho_{22}T^u(1,y).
\end{align*}
\begin{rem}
Due to the cancellation of the boundary reflection terms, the control law $U(t)$ may not be robust to delays but it is possible to robustify it by applying a well-tuned low pass filter \citep{auriol2023robustification}.
\end{rem}
% \begin{lem}
%     The transformation~\eqref{alpha_1}-\eqref{beta_3} is invertible from $H^1([0,1],\mathbb{R}^6)$ to $H^1([0,1],\mathbb{R}^6)$.
% \end{lem} 
% \begin{pf}
%     Indeed, the transforms~\eqref{alpha_1},\eqref{beta_1},\eqref{alpha_3},\eqref{beta_3} are invertible as Volterra transform \citep{Yoshida1960}. Then~\eqref{alpha_2},\eqref{beta_2} are just Volterra transforms plus an affine known term. 
% \end{pf} 
\subsection{Integral Difference Equation}
In this section, and throughout the remainder of the article, we assume that the initial conditions \( (u_0, v_0) \) belong to \( H^1([0,1], \mathbb{R}^3) \times H^1([0,1], \mathbb{R}^3) \) and satisfy the zero-order boundary conditions specified in~\eqref{system_edp_origin}. Consequently, by applying the results from~\citep[Appendix A]{BastinCoron2016}, the original open loop system~\eqref{system_edp_origin} admits a unique solution in \( C^0([0,+\infty), H^1([0,1], \mathbb{R}^3)) \times C^0([0,+\infty), H^1([0,1], \mathbb{R}^3)) \). Since the control to be designed will be admissible and continuous in time, the same reference guarantees the well-posedness of the resulting closed-loop system.

In order to stabilize the transport system \eqref{transport_intermediate_system}, we
propose to rewrite it as an IDE. We define
% choose a boundary condition for every subsystem that we will stabilize.
% \lb{\sout{over time.}}
% Choosing
$x_2(t) = \alpha_2(t,0)$, $\bar{x}_2(t) =\beta_2(t,1)$, and $x_3(t) = \alpha_3(t,0)$ as the new states, and rewrite the boundary conditions~\eqref{boundary_cond_target_system} using the method of characteristics. %\ja{Dire ici qu'on peut considérer ces fonctions comme étant $C_{pw}$ à partir du moment où l'état de l'EDP est $H1$ (opérateur trace). Ca peut être le bon moment pour introduire ces définitions (déplacées depuis la Section IV) 
 For a function $\phi: [-\tau, \infty) \mapsto \mathbb{R}^n$, its partial trajectory $\phi_{[t]}$ is defined by $\phi_{[t]}(\theta)=\phi(t+\theta), -\tau \leq \theta \leq 0$.
Let us denote $L^2_{\tau}:=L^2(-\tau,0)$.

%Let $C_{\tau}^{pw}$ be the Banach space of real-valued piecewise continuous functions defined over $[-\tau,0]$, endowed with the norm defined by 
%$$||\phi_{[t]}||_{C^{pw}_\tau} = \sup_{s \in [-\tau,0]} \sqrt{\phi^T(t+s) \phi(t+s)}.$$
Recall that $\tau_i$, as defined in~\eqref{tau_i def}, denotes the total transport time corresponding to subsystem $i$.
We suppose $\tau_2 \leq \tau_3$ as the method in the other case is identical.
For all $t>\tau_1+\tau_2+\tau_3$, we obtain:
\begin{align}
x_2(t) &=q_{22}\bar{x}_2(t-1/ \mu_2) + q_{21}q_{11}\bar{U}(t-\tau_1) +\int_0^{\tau_1}\bar{U}(t-\nu)M(\nu)d\nu + \int_0^{\tau_3}x_2(t-\nu)H^{22}(\nu)d\nu \nonumber\\
&~+\int_0^{\tau_3}\bar{x}_2(t-\nu)H^{2 \bar{2}}(\nu)d\nu +\int_0^{\tau_3} x_3(t-\nu)H^{23}(\nu)d\nu \label{x2},\\
\bar{x}_2(t) &= \rho_{23}\rho_{33}x_3(t-\tau_3) + \rho_{22}x_2(t-1/ \lambda_2) + \int_0^{\tau_3}H^{\bar{2} 3}(\nu)x_3(t-\nu) d\nu \label{x_2barre},\\
x_3(t) &= q_{32}x_2(t-\frac{1}{\lambda_2})+q_{33}\rho_{33}x_3(t-\tau_3) + \int_0^{\tau_3}H^{33}(\nu)x_3(t-\nu)d\nu,\label{x3}
\end{align}
where,
\begin{align*}
H^{33}(\nu)&= \mathds{1}_{[0, 1/\lambda_3]}(\nu)P_{33}(\nu \lambda_3)\lambda_3
\\
&~+ \mathds{1}_{[1/\lambda_3, \tau_3]}(\nu)W_{33}(1-\nu \mu_3 + \frac{\mu_3}{\lambda_3})
\mu_3,\\
H^{\bar{2} 3}(\nu) &=  \mathds{1}_{[0, 1/\lambda_3]}(\nu)\lambda_3 J_{23}(\nu \lambda_3) \\
&~+  \mathds{1}_{[1/\lambda_3, \tau_3]}(\nu)\mu_3 \rho_{33} C_{23}(1-\nu \mu_3 + \frac{\mu_3}{\lambda_3}), \\
H^{23}(\nu)&=  \mathds{1}_{[0, 1/\lambda_3]}(\nu)\lambda_3P_{23}(\nu \lambda_3) \\
&~+ \mathds{1}_{[1/\lambda_3, \tau_3]}(\nu)\mu_3 \rho_{33}W_{23}(1-\nu \mu_3 + \frac{\mu_3}{\lambda_3}),\\
M(\nu)&=   \mathds{1}_{[0, 1/\mu_1]}(\nu)\mu_1W_{21}( 1-\mu_1 \nu) \\
&~+  \mathds{1}_{[1/\mu_1, \tau_1]}(\nu)\lambda_1 q_{11}P_{21}( \nu \lambda_1 -\frac{\lambda_1}{\mu_1})),\\
H^{22}(\nu) &= \mathds{1}_{[0,1/\lambda_2]}(\nu)\lambda_2P_{22}(\nu\lambda_2),\\
H^{2 \bar{2}}(\nu) &= \mathds{1}_{[0,1/\mu_2]}(\nu)\mu_2W_{22}( 1-\mu_2 \nu),
\end{align*}
and $\mathds{1}_I(\nu) := \begin{cases}1~\text{if}~\nu \in I\\
0~\text{if not}\end{cases}$ for $I$ an interval of $\mathbb{R}$.

The well-posedness of the open-loop system~\eqref{x2}--\eqref{x3} is established in~\cite[Chapter 9]{halebook}. We now recall the definition of exponential stability for the open-loop system~\eqref{x2}--\eqref{x3}.
\begin{defn}
    System~\eqref{x2}-\eqref{x3} with $\bar U\equiv0$ is exponentially stable if there exist
    $\omega_0 > 0,$ and $K_0 \geq 1,$ such that for all $(x_2^0, \bar{x}^0_2, x^0_3) \in \left(L^2_{\tau_2+\tau_3}\right)^3$, the unique corresponding solution to~\eqref{x2}-\eqref{x3} 
    is such that, for all $t\geq0$,    $$\|x_{2{[t]}},\bar{x}_{2{[t]}}, x_{3[t]}\|_{\left(L^2_{\tau_2+\tau_3}\right)^3} 
    \leq K_0\mathrm{e}^{-\omega_0 t}\|x_2^0, \bar{x}^0_2, x^0_3\|_{\left(L^2_{\tau_2+\tau_3}\right)^3}$$
\end{defn}
By exploiting the method of characteristics, it was shown in
\citep[Theorem 6.1.3]{AuriolHDR}
that the exponential stability of~\eqref{x2}-\eqref{x3} is equivalent to the exponential stability of system~\eqref{system_edp_origin}. Therefore, we now focus on the exponential stabilization of the IDE~\eqref{x2}-\eqref{x3}.

As $q^{32}\neq 0$ due to Assumption~\ref{non_zero_coeff_nes}, the exponential stability of $x_3$ implies the exponential stability of $x_2$.
Hence, multiplying equation~\eqref{x_2barre} by $q_{32}$ gives us a relation between $\bar{x}_2$ and $x_3$.
We then multiply equation~\eqref{x2}by $q_{32}$ and do a change of variable ($\Tilde{t} = t-\frac{1}{\lambda_2}$, we shall keep the variable $t$ in the following), to get an equation that only depends on $x := x_3$ :
\begin{align}\label{IDE_formulation}
x(t) &= a_{3} x(t-\tau_3) + a_{2}x(t-\tau_2) + a_{23}x(t-\tau_2-\tau_3) + b\bar{U}(t-\tau_1-\frac{1}{\lambda_2}) \nonumber\\
&~+ \int_{0}^{\tau_1+\frac{1}{\lambda_2}} \Tilde{M}(\nu)\bar{U}(t-\nu)d\nu + \int_0^{\tau_3 + \tau_2}N(\nu)x(t-\nu),\end{align}
where $a_3=q_{33}\rho_{33},a_2 = q_{22}\rho_{22}, a_{23} =q_{22}(q_{32}\rho_{23}\rho_{33}-q_{33}\rho_{33}\rho_{22})$, and $b= q_{32}q_{21}q_{11}\neq 0$ (according to Assumptions~\ref{non_zero_coeff_nes} and \ref{non_zero_coeff_non_nes}),
$\Tilde{M}:= q_{32}M(\cdot - \frac{1}{\lambda_2})\mathds{1}_{[ \frac{1}{\lambda_2}, \tau_1 + \frac{1}{\lambda_2}]}(\cdot)$ and 
\begin{align*}
    &N(\nu)=\mathds{1}_{[0,\tau_3]}(\nu)\left(H^{33}(\nu)+H^{22}(\nu)-\int_0^{\nu}H^{33}(\eta)H^{22}(\nu-\eta)d\eta\right)\\
    &-\mathds{1}_{[\tau_3,2\tau_3]}(\nu)\left(q_{33}\rho_{33}H^{22}(\nu-\tau_3)-\int_{\nu-\tau_3}^{\tau_3}H^{33}(\eta)H^{22}(\nu-\eta)d\eta\right)\\
    &+\mathds{1}_{[\tau_2,\tau_2+\tau_3]}(\nu)q_{22}\left(q_{33}H^{\bar{2}3}(\nu-\tau_2)-\rho_{22}H^{33}(\nu-\tau_2)\right)\\
    &+\mathds{1}_{[\frac{1}{\lambda_2},\tau_3+\frac{1}{\lambda_2}]}(\nu)\Bigg(q_{32}H^{23}\left(\nu-\frac{1}{\lambda_2}\right)\\
    &+\int_0^{\nu-\frac{1}{\lambda_2}}H^{2\bar{2}}(\nu-\frac{1}{\lambda_2}-\eta)(q_{33}H^{\bar{2}3}(\eta)-\rho_{22}H^{33}(\eta))d\eta\Bigg)\\
    &+\mathds{1}_{[\frac{1}{\lambda_2}+\tau_3,2\tau_3+\frac{1}{\lambda_2}]}(\nu)\Bigg((q_{32}\rho_{23}\rho_{33}-q_{33}\rho_{33}\rho_{22})H^{2\bar{2}}(\nu-\frac{1}{\lambda_2}-\tau_3)\\
    &+\int_{\nu-\tau_3-\frac{1}{\lambda_2}}^{\tau_3}H^{2\bar{2}}(\nu-\frac{1}{\lambda_2}-\tau_3-\eta)(q_{33}H^{\bar{2}3}(\eta)-\rho_{22}H^{33}(\eta))d\eta\Bigg).
\end{align*}
The kernels $N$ and $\Tilde{M}$ are piecewise continuous functions respectively defined on $[0,\tau_2+\tau_3]$ and $[0, \tau_1+ \frac{1}{\lambda_2}]$.

After the backstepping transformation of Section 2, and the time-delay reformulation of Section 3,  we have therefore shown that the original problem of exponential stabilization of the chain of hyperbolic PDEs~\eqref{system_edp_origin} is equivalent to the problem of
exponential stabilization of
the IDE \eqref{IDE_formulation}.
We focus on the design of a stabilizing controller in the next section.

Moreover, we suppose $\tau_1 + \frac{1}{\lambda_2}>\tau_2 + \tau_3 $, i.e., the delay on the input is longer than the longest delay on the state.
% We aim to stabilize the previous \lb{IDE}~\eqref{IDE_formulation}. 
% Let us suppose that $\tau_1 + \frac{1}{\lambda_2}\geq \tau_2 + \tau_3 $, i.e., the delay on the input is longer than the longest delay on the state.
The case $\tau_1 + \frac{1}{\lambda_2}\leq \tau_2 + \tau_3$ could be covered with the same method or by artificially delaying the control by taking $\bar{U}(t) = \hat{U}(t - \tau_2 -\tau_3 +\tau_1 +\frac{1}{\lambda_2})$.
\section{Design of a State-Feedback Controller}
\label{controller design}
Our objective is now to stabilize the IDE~\eqref{IDE_formulation} in the sense of the $L^2$ norm as this will imply the exponential stability of the PDE system. 
Due to Assumption~\ref{Assum_stab_OL}, we have that the principal part of \eqref{IDE_formulation} (i.e., the system defined by $x(t) = a_{3} x(t-\tau_3) + a_{2}x(t-\tau_2) + a_{23}x(t-\tau_2-\tau_3)$) is exponentially stable in the sense of the $L^2$-norm. (see \citep[Theorem 6.1.3]{AuriolHDR}).  Hence, the main difficulty in the stabilization of~\eqref{IDE_formulation} consists of dealing with the integral term with the kernel $N$, which can possibly destabilize the principal part.
We rely on the following condition:
\begin{assum}\label{spectral_cont_retard}
     $\forall s\in \mathbb{C},
    \rank (F_0(s), F_1(s)) = 1$, where
    \begin{align*}F_0(s) &=1- a_3e^{-\tau_3 s} - a_2e^{-\tau_2 s}-a_{23}e^{-(\tau_2 +\tau_3) s}-\int_0^{\tau_2 +\tau_3}N(\nu)e^{-\nu s}d\nu,\end{align*}
    and $$F_1(s)=be^{-(\tau_1 + \frac{1}{\lambda_2})s}+\int_{0}^{\tau_1 + \frac{1}{\lambda_2}}\Tilde{M}(\nu)e^{-\nu s} d\nu.$$
\end{assum}
\begin{rem}This assumption is thought to follow from a spectral controllability condition, as in \citep[Chapter 9]{AuriolHDR}. However, in the case of three interconnected hyperbolic systems, the computations required to establish possible connections with the controllability properties of the PDE~\citep{fattorini1966some} become exceedingly complex due to the intricate structure of the kernel $N$. Furthermore, this assumption is not necessary as it should be possible to use a weaker condition, for all $s\in \mathbb{C}$ such that $\Re(s) \geq 0$, $\rank(F_0(s), F_1(s) = 1$. But again the analysis would become more difficult as it is not directly possible to show the invertibility of the Fredholm operator~\eqref{tau_def}.
 \end{rem}
Inspired by~\citep{AuriolLCSS}, we consider the dynamic control law defined by
\begin{align}\label{U_somme_retard_x}
\bar{U}(t) &= \int_0^{\tau_1 + \frac{1}{\lambda_2}} g(\nu)\bar{U}(t-\nu)d \nu + \int_0^{\tau_2 +\tau_3} f(\nu)x(t-\nu)d\nu  \end{align}
where the functions $g$,  and $f = \mathds{1}_{[0, \tau_2)}f_2 +
\mathds{1}_{[\tau_2, \tau_3)}f_3
+\mathds{1}_{[\tau_3, \tau_2+\tau_3]}f_{23}$ are controller gains yet to be determined. This control law has an auto-regressive structure~\citep{ABNT-IJRNC-2024} as it depends on past values of itself.

The stability analysis will be conducted in the Laplace domain. To ease the reading, the Laplace transform of a function of time $y(t)$ will be denoted $y(s)$. We will actually show the exponential stability of the vector $(x(t),U(t))$, as this extended state can be seen as the solution of an extended IDE.
Taking the Laplace transform of equations~\eqref{IDE_formulation} and~\eqref{U_somme_retard_x} we get for $s \in \mathbb{C}$
$$\begin{pmatrix}
    0\\
    0
\end{pmatrix} = Q(s) \begin{pmatrix}
    x(s)\\
    U(s)
\end{pmatrix},$$ 
with $Q$ defined by,
\begin{equation*}\label{Q_laplace_eq_retard}Q(s) = \begin{pmatrix}
    F_0(s) & -F_1(s) \\
    \int_0^{\tau_2+\tau_3} f(\nu)e^{-\nu s}d\nu  &
    1- \int_0^{\tau_1 + \frac{1}{\lambda_2}} g(\nu)e^{-\nu s}d\nu
\end{pmatrix}\end{equation*}
The associated characteristic equation is defined by:
\begin{align}\label{characteristic equation}
    0 = \det(Q(s)).
\end{align}
To investigate the stability of the closed-loop system, we shall use the following lemma.
\begin{lem} \label{stable_lemma}
(\citep[Theorem 3.5]{halebook} and \citep{henry1974linear}).
The closed-loop system~\eqref{IDE_formulation}-\eqref{U_somme_retard_x} is exponentially stable if and only if there exists~$\eta_0>0$ such that all complex solutions $s$ of its associated characteristic equation \eqref{characteristic equation} satisfy~$\text{Re}(s)<-\eta_0$.
\end{lem}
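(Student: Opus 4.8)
The plan is to cast the closed loop in the abstract framework of linear autonomous difference equations and then to invoke a spectrum-determined growth argument, following \citep{halebook} and \citep{henry1974linear}. First I would assemble the extended state $z(t)=(x(t),\bar U(t))$ and observe that \eqref{IDE_formulation}--\eqref{U_somme_retard_x} is a system of integral difference equations of the form $z(t)=\sum_k A_k z(t-r_k)+\int_0^{\tau}G(\nu)z(t-\nu)\,d\nu$, with no instantaneous derivative term, where $\tau=\max(\tau_2+\tau_3,\tau_1+\frac{1}{\lambda_2})$. On the product history space $(L^2_\tau)^2$ the solution map defines a strongly continuous semigroup $(T(t))_{t\ge 0}$ whose infinitesimal generator $\mathcal A$ has a resolvent that is meromorphic in $s$, with poles exactly at the zeros of $\det(Q(s))$. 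Consequently $\sigma(\mathcal A)=\{s\in\mathbb C:\det(Q(s))=0\}$, so the root set of the characteristic equation \eqref{characteristic equation} is precisely the spectrum of the generator.

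The necessity direction is the easy one. If $s_0$ solves \eqref{characteristic equation}, there is a nonzero vector $(x_0,U_0)\in\ker Q(s_0)$, and $z(t)=e^{s_0 t}(x_0,U_0)$ is, after choosing the corresponding history as initial datum, a genuine solution of the closed loop. Exponential stability with rate $\omega_0$ then forces $\mathrm{Re}(s_0)<-\omega_0$; since this holds for every root, any $\eta_0\in(0,\omega_0)$ yields the stated uniform bound $\mathrm{Re}(s)<-\eta_0$.

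For sufficiency I would establish the spectrum-determined growth equality $\omega_0(T)=\sup\{\mathrm{Re}(s):\det(Q(s))=0\}$, so that a strictly negative, uniformly bounded spectral abscissa yields exponential decay. This is exactly the content of \citep[Theorem 3.5]{halebook} together with \citep{henry1974linear}, but for a difference-type system its validity rests on a nontrivial hypothesis: the essential spectrum, dictated by the pointwise-delay (principal) part $z(t)=\sum_k A_k z(t-r_k)$, must lie in a half-plane $\mathrm{Re}(s)\le-\delta$ for some $\delta>0$. I would verify this using Assumption~\ref{Assum_stab_OL}, which guarantees that the principal part of \eqref{IDE_formulation} is exponentially stable and hence that the associated difference operator is stable in the sense of \citep[Chapter 9]{halebook}; this prevents chains of eigenvalues of the full operator from drifting toward the imaginary axis, so that the compact perturbation coming from the distributed kernels $N$, $\tilde M$, $f$, $g$ cannot destroy the spectrum-determined growth property. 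Concretely, one bounds $(sI-\mathcal A)^{-1}$ uniformly on the strip $\mathrm{Re}(s)\ge-\eta_0+\epsilon$ and applies an inverse-Laplace/Paley--Wiener estimate to recover the decay of $T(t)$.

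The main obstacle is precisely this last point: for retarded equations the spectrum-determined growth condition is automatic, whereas for the present difference-type IDE it is not, and the crux is showing that the stability of the principal part controls the essential spectrum and supplies the uniform resolvent bound. Once this is in place, the two implications combine to give the equivalence, and the existence of $\eta_0>0$ with all roots in $\mathrm{Re}(s)<-\eta_0$ is exactly $\omega_0(T)<0$, i.e.\ exponential stability of the closed loop.
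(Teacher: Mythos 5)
The paper gives no proof of this lemma at all---it is quoted directly from \citep[Theorem 3.5]{halebook} and \citep{henry1974linear}---and your sketch is a faithful reconstruction of precisely that cited machinery (extended state $(x,\bar U)$, solution semigroup on the $L^2$ history space, characteristic matrix $Q(s)$, spectrum-determined growth for difference-type equations with the trivial necessity direction via exponential eigensolutions), so it is essentially the same approach. One mis-emphasis worth noting: the uniform bound $\mathrm{Re}(s)<-\eta_0$ on the full root set already forces the essential spectrum (the asymptotic vertical root chains of the atomic part, since the distributed kernels vanish at high frequency by Riemann--Lebesgue) into $\mathrm{Re}(s)\leq-\eta_0$, so the equivalence holds unconditionally for such kernels and your invocation of Assumption~\ref{Assum_stab_OL} in the sufficiency step, while harmless in the paper's context where it is in force anyway, is not actually needed---the genuine role of the uniform $\eta_0$ is that the supremum of real parts of roots need not be attained for difference equations, which is exactly why the lemma demands $\mathrm{Re}(s)<-\eta_0$ rather than merely $\mathrm{Re}(s)<0$.
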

The design objective is then to find $g,f$ such that the characteristic equation~\eqref{characteristic equation} is reduced to the characteristic equation of the principal part, which is already exponentially stable.

% defined in the following lemma, which is a consequence of Assumption~\ref{Assum_stab_OL} (see \citep[Theorem 6.1.3]{AuriolHDR}).
% \begin{lem}\label{principal_part}
%     The principal part of \eqref{IDE_formulation},
% defined by $x(t) = a_{3} x(t-\tau_3) + a_{2}x(t-\tau_2) + a_{23}x(t-\tau_2-\tau_3)$,
% is exponentially stable.
% \end{lem}
%     Hence, the main difficulty in the stabilization of~\eqref{IDE_formulation} is the integral term with the kernel $N$ which can possibly destabilize the principal part.
% We will need the following assumption.
% \begin{assum}\label{spectral_cont_retard}
%      $\forall s\in \mathbb{C},
%     \rank (F_0(s), F_1(s)) = 1$, where
%     \begin{align*}F_0(s) &=1- a_3e^{-\tau_3 s} - a_2e^{-\tau_2 s}-a_{23}e^{-(\tau_2 +\tau_3) s}
%     \\&~-\int_0^{\tau_2 +\tau_3}N(\nu)e^{-\nu s}d\nu\end{align*}
%     and $F_1(s)=be^{-(\tau_1 + \frac{1}{\lambda_2})s}+\int_{0}^{\tau_1 + \frac{1}{\lambda_2}}\Tilde{M}(\nu)e^{-\nu s} d\nu$.
% \end{assum}
% This assumption is thought to follow from a spectral controllability condition, as in \citep[Chapter 9]{AuriolHDR}. However, in the case of three interconnected hyperbolic systems, the computations required to establish this result become exceedingly complex due to the intricate structure of the kernel $N$.
Let $s$ be a complex solution of the characteristic equation~\eqref{characteristic equation}. Then, 
\begin{align*}
 0&=1-a_3 e^{-\tau_3 s} -  a_2e^{-\tau_2 s}-a_{23}e^{-(\tau_2 +\tau_3) s}
-\int_0^{\tau_2 +\tau_3}N(\nu)e^{-\nu s}d\nu 
    -\int_0^{\tau_1 + \frac{1}{\lambda_2}} g(\nu)e^{-\nu s}d\nu\\ 
    &~+\int_{\tau_3}^{\tau_1 + \tau_3 + \frac{1}{\lambda_2}}a_3 g(\nu-\tau_3)e^{-\nu s}d\nu
   +\int_{\tau_2}^{\tau_1 + \tau_2 + \frac{1}{\lambda_2}}a_2 g(\nu-\tau_2)e^{-\nu s}d\nu \\ &~+\int_{\tau_3+\tau_2}^{\tau_1 + \tau_3 +\tau_2+ \frac{1}{\lambda_2}}a_{23} g(\nu-\tau_3-\tau_2)e^{-\nu s}d\nu+\int_0^{\tau_1 + \frac{1}{\lambda_2}} \int_0^{\tau_2 + \tau_2}N(\nu)g(\eta)e^{-(\nu + \eta)s}d\eta d\nu\\
    &~-\int_{\tau_1 + \frac{1}{\lambda_2} }^{\tau_1 + \frac{1}{\lambda_2} + \tau_3+\tau_2 }b f(\nu-\tau_1 - \frac{1}{\lambda_2})e^{-\nu s}d\nu-\int_{0}^{\tau_2 +\tau_3} \int_{0}^{\tau_1 + \frac{1}{\lambda_2}}\Tilde{M}(\nu)f(\eta)e^{-(\nu + \eta)s}d\eta d\nu \\
    &= 1-a_3 e^{-\tau_3 s} -  a_2e^{-\tau_2 s}-a_{23}e^{-(\tau_2 +\tau_3) s}-\int_0^{\tau_1 + \frac{1}{\lambda_2}} (I_1^{g,f}(\nu) + N(\nu)\mathds{1}_{[0, \tau_3 + \tau_2]}(\nu)e^{-\nu s}d\nu
    \\ &~-\int_{\tau_1+\frac{1}{\lambda_2}}^{\tau_2+\tau_1+\frac{1}{\lambda_2}} I_2^{g,f}(\nu-\tau_1-\frac{1}{\lambda_2})e^{-\nu s}d\nu
-\int_{\tau_2+\tau_1+\frac{1}{\lambda_2}}^{\tau_3+\tau_1+\frac{1}{\lambda_2}} I_3^{g,f}(\nu-\tau_1-\frac{1}{\lambda_2})e^{-\nu s}d\nu
    \\&~-\int_{\tau_3+\tau_1+\frac{1}{\lambda_2}}^{\tau_3 +\tau_2+\tau_1+\frac{1}{\lambda_2}} I_{23}^{g,f}(\nu-\tau_1-\frac{1}{\lambda_2})e^{-\nu s}d\nu,
\end{align*}
with $a_{\tau_2}=a_2$, $a_{\tau_3} = a_3$, $a_{\tau_2+\tau_3} = a_{23}$, and
\begin{align*}
    I_1^{g,f}(\nu) &=g(\nu) - \sum_{\delta \in \{\tau_2, \tau_3, \tau_2 +\tau_3\}} a_{\delta} g(\nu-\delta)\mathds{1}_{(\delta,\tau_1 + \frac{1}{\lambda_2}]}(\nu) \\
    &~+ \mathds{1}_{[0, \tau_2 +\tau_3]}(\nu) \int_0^{\nu}f(\eta)\Tilde{M}(\nu-\eta)- g(\eta)N(\nu-\eta)d\eta \\
    &~+\mathds{1}_{(\tau_2 +\tau_3,\tau_1 + \frac{1}{\lambda_2}]}(\nu)\Bigg(\int_0^{\tau_2 +\tau_3} f(\eta)\Tilde{M}(\nu-\eta)d\eta - \int_{\nu-\tau_3-\tau_2}^\nu g(\eta)N(\nu-\eta)d\eta \Bigg),\\
    I_2^{g,f}(\nu) &=bf_2(\nu) -  \sum_{\delta \in \{\tau_2, \tau_3, \tau_2 +\tau_3\}} a_{\delta} g(\nu+\tau_1 +\frac{1}{\lambda_2}-\delta)\mathds{1}_{[0,\delta]}(\nu)
    \\
    &~+\int_{\nu}^{\tau_2+\tau_3}f(\eta)\Tilde{M}(\nu-\eta)d\eta
    -\int_{\nu-\tau_3-\tau_2+\tau_1+\frac{1}{\lambda_2}}^{\tau_1+\frac{1}{\lambda_2}} g(\eta)N(\nu-\eta)d\eta,\\
    I_3^{g,f}(\nu) &=bf_3(\nu) -  \sum_{\delta \in \{\tau_3, \tau_2 +\tau_3\}} a_{\delta} g(\nu+\tau_1 +\frac{1}{\lambda_2}-\delta)\mathds{1}_{[\tau_2,\delta]}(\nu)
    \\
    &~+\int_{\nu}^{\tau_2+\tau_3}f(\eta)\Tilde{M}(\nu-\eta)d\eta
    -\int_{\nu-\tau_3-\tau_2+\tau_1+\frac{1}{\lambda_2}}^{\tau_1+\frac{1}{\lambda_2}} g(\eta)N(\nu-\eta)d\eta,\\
    I_{23}^{g,f}(\nu) &=bf_{23}(\nu) -  a_{23} g(\nu+\tau_1 +\frac{1}{\lambda_2}-\tau_2-\tau_3)\mathds{1}_{[\tau_3,\tau_2 +\tau_3]}(\nu)
    +\int_{\nu}^{\tau_2+\tau_3}f(\eta)\Tilde{M}(\nu-\eta)d\eta
    \\&~-\int_{\nu-\tau_3-\tau_2+\tau_1+\frac{1}{\lambda_2}}^{\tau_1+\frac{1}{\lambda_2}} g(\eta)N(\nu-\eta)d\eta,
\end{align*}
where we have computed the double integral terms using Proposition \ref{prop:fubini} in Appendix~\ref{fubini}.

Choosing $g$ and $f$ such that $I_1^{g,f} = -N\mathds{1}_{[0, \tau_3 + \tau_2]}$, $I_2^{g,f}=0$, $I_3^{g,f}=0$, $I_{23}^{g,f}=0$ ensures the solutions of the characteristic equation~\eqref{characteristic equation}  are the same as the ones of the principal part which is exponentially stable due to Assumption~\ref{Assum_stab_OL}. We then prove the exponential stability of the closed-loop system~\eqref{IDE_formulation}-\eqref{U_somme_retard_x}  using \citep{halebook}[Theorem 3.5] and \citep{henry1974linear}.
The problem is reformulated as finding $g,f$ such that $\mathcal{T}(g,f) = (-N\mathds{1}_{[0, \tau_3 + \tau_2]},0)$
with 
\begin{align}\mathcal{T}~:&~X\longrightarrow X\label{tau_def}\\ \nonumber
&(g,f)^T \mapsto (I_1^{g,f}, I_2^{g,f}, I_3^{g,f}, I_{23}^{g,f})^T
\end{align}
and $X:=L^2(0,\tau_1 + \frac{1}{\lambda_2})\times L^2(0,\tau_2)\times L^2(\tau_2,\tau_3)\times L^2(\tau_3,\tau_2 + \tau_3)$.
We will establish the invertibility of $\mathcal{T}$ by applying the following lemma.
\begin{lem}\label{Coron lemma}
\citep[Lemma 2.2]{Coron2016}.
    Consider two linear operators $\mathcal{A},\mathcal{B}$, such that $D(\mathcal{A})=D(\mathcal{B}) \subset X $. Consider an operator $\mathcal{T}:  X \rightarrow X$ of the form $\mathcal{T} = D + K$ with $D$ an invertible operator on $X$ and $K$ a compact operator on $X$. Assume that 
\begin{enumerate}
    \item $\ker(\mathcal{T}) \subset D(\mathcal{A})$,
    \item $\ker(\mathcal{T}) \subset \ker(\mathcal{B})$,
    \item $\forall h \in \ker(\mathcal{T}),~\mathcal{T} \mathcal{A} h=0 $,
    \item $\forall s \in \mathbb{C}$, $\ker (s\text{Id}-\mathcal{A}) \cap \ker (\mathcal{B})=\{0\}$.
\end{enumerate}
Then, the operator $\mathcal{T}$ is invertible.
\end{lem}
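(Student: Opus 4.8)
The plan is to reduce the invertibility of $\mathcal{T}$ to the triviality of its kernel, and then to exclude nonzero kernel elements by a finite-dimensional spectral argument built on hypotheses 1--4. First I would exploit the structure $\mathcal{T} = D + K$: since $D$ is invertible and $K$ is compact, we may write $\mathcal{T} = D(\mathrm{Id} + D^{-1}K)$ with $D^{-1}K$ compact. By Riesz--Schauder theory, $\mathrm{Id} + D^{-1}K$ is a Fredholm operator of index zero, and composing with the isomorphism $D$ preserves this property, so $\mathcal{T}$ is itself Fredholm of index zero. For such an operator, injectivity is equivalent to bijectivity; hence it suffices to prove that $\ker(\mathcal{T}) = \{0\}$.

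Next I would show that $\ker(\mathcal{T})$ is a finite-dimensional subspace (again by Fredholmness) that is invariant under $\mathcal{A}$. Indeed, hypothesis 1 gives $\ker(\mathcal{T}) \subset D(\mathcal{A})$, so $\mathcal{A}$ is well defined on every kernel element; hypothesis 3 states that $\mathcal{T}\mathcal{A}h = 0$ for all $h \in \ker(\mathcal{T})$, i.e. $\mathcal{A}(\ker \mathcal{T}) \subset \ker(\mathcal{T})$. Thus the a priori unbounded operator $\mathcal{A}$ restricts to a genuine linear endomorphism of the finite-dimensional vector space $\ker(\mathcal{T})$; this invariance is exactly what circumvents the difficulty posed by the unboundedness of $\mathcal{A}$, since it keeps all iterates inside $D(\mathcal{A})$.

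I would then conclude by contradiction. Suppose $\ker(\mathcal{T}) \neq \{0\}$. Since $\mathcal{A}|_{\ker(\mathcal{T})}$ is an endomorphism of a nonzero finite-dimensional complex vector space, it admits an eigenvalue $s \in \mathbb{C}$ with eigenvector $h \neq 0$, so that $(s\,\mathrm{Id} - \mathcal{A})h = 0$, i.e. $h \in \ker(s\,\mathrm{Id} - \mathcal{A})$. On the other hand, hypothesis 2 gives $h \in \ker(\mathcal{T}) \subset \ker(\mathcal{B})$. Hence $h \in \ker(s\,\mathrm{Id} - \mathcal{A}) \cap \ker(\mathcal{B})$, which by hypothesis 4 equals $\{0\}$, contradicting $h \neq 0$. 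Therefore $\ker(\mathcal{T}) = \{0\}$, and by the first step $\mathcal{T}$ is invertible.

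The main obstacle I anticipate is the careful treatment of the possibly unbounded operator $\mathcal{A}$: one must ensure that repeated application of $\mathcal{A}$ to kernel elements stays within the domain (guaranteed precisely by $\ker(\mathcal{T}) \subset D(\mathcal{A})$ together with $\mathcal{A}$-invariance), and that the spectral step is carried out over $\mathbb{C}$ so that an eigenvalue is assured to exist, complexifying $X$ beforehand if it is taken to be real. The Fredholm reduction is standard, and the remaining points (finite dimensionality of the kernel and existence of an eigenvalue) are routine once the invariance of $\ker(\mathcal{T})$ under $\mathcal{A}$ has been established.
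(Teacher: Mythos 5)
Your proof is correct and coincides with the argument behind the cited result: the paper itself states this lemma without proof, simply quoting \citep[Lemma 2.2]{Coron2016}, and your three steps --- $\mathcal{T}=D(\mathrm{Id}+D^{-1}K)$ is Fredholm of index zero so injectivity suffices, hypotheses 1 and 3 make the finite-dimensional $\ker(\mathcal{T})$ invariant under $\mathcal{A}$, and an eigenvalue of $\mathcal{A}|_{\ker(\mathcal{T})}$ would contradict hypotheses 2 and 4 --- are exactly the standard proof from that reference. Your closing caveat about complexifying $X$ when it is real, so that the eigenvalue over $\mathbb{C}$ in hypothesis 4 is guaranteed to exist, is the right subtlety to flag and is handled correctly.
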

\begin{thm} Under Assumptions~\ref{non_zero_coeff_nes},~\ref{non_zero_coeff_non_nes},~\ref{spectral_cont_retard}. The operator \label{tau_invertible}
    $\mathcal{T}$ defined by equation~\eqref{tau_def} is invertible. Hence, there exists a unique pair $(g,f)\in X$ such that $$\mathcal{T}(g,f) = (-N\mathds{1}_{[0, \tau_3 + \tau_2]},0).$$
\end{thm}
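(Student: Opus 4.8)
The plan is to verify the hypotheses of Lemma~\ref{Coron lemma} for the operator $\mathcal{T}$, identifying the right decomposition $\mathcal{T} = D + K$ together with auxiliary operators $\mathcal{A}$ and $\mathcal{B}$. First I would read off from the explicit formulas for $I_1^{g,f}, I_2^{g,f}, I_3^{g,f}, I_{23}^{g,f}$ the ``diagonal'' part that is pointwise in $(g,f)$ and does not involve an integral (Fredholm/Volterra) contribution. Concretely, the leading terms are $g(\nu)$ in $I_1^{g,f}$ and the terms $b f_2(\nu)$, $b f_3(\nu)$, $b f_{23}(\nu)$ in $I_2^{g,f}, I_3^{g,f}, I_{23}^{g,f}$ respectively. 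Since $b = q_{32}q_{21}q_{11}\neq 0$ by Assumptions~\ref{non_zero_coeff_nes} and \ref{non_zero_coeff_non_nes}, the operator
\[
D(g,f) = \left(g,\; b f_2,\; b f_3,\; b f_{23}\right)
\]
is a bounded invertible operator on $X$ (its inverse just divides the last three components by $b$ and leaves the first unchanged). I would then collect all remaining terms into $K$: these are the pointwise shift terms $g(\nu-\delta)$, $g(\nu + \tau_1 + \tfrac{1}{\lambda_2} - \delta)$, and the integral terms against the kernels $N$ and $\tilde{M}$. The shift operators are bounded and map into a finite sum of translated copies, and I would argue their compactness (or fold them into the invertible part where they are merely bounded Volterra-type contributions); the integral terms are Hilbert--Schmidt operators on $L^2$ since $N$ and $\tilde M$ are piecewise continuous on bounded intervals, hence square integrable, so $K$ is compact on $X$.

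Next I would construct the differentiation operator $\mathcal{A}$ and the trace/boundary operator $\mathcal{B}$ so as to mirror the spectral structure encoded by $Q(s)$. Following the template of~\citep{AuriolLCSS}, the natural choice is to let $\mathcal{A}$ be the derivative $\frac{d}{d\nu}$ acting on each component (with domain the appropriate Sobolev space of functions on the respective intervals), and to let $\mathcal{B}$ extract the boundary values of $(g,f)$ at the endpoints of those intervals. The role of condition~4, $\ker(s\,\mathrm{Id} - \mathcal{A}) \cap \ker(\mathcal{B}) = \{0\}$, is then immediate: eigenfunctions of $\frac{d}{d\nu}$ are exponentials $\nu \mapsto c\,e^{s\nu}$, which vanish at a boundary point only if $c=0$. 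The substantive work is checking conditions~1--3: an element $h=(g,f)$ of $\ker(\mathcal{T})$ solves a homogeneous system of coupled Volterra/Fredholm integral equations, from which I would bootstrap regularity (condition~1, $h\in D(\mathcal{A})$) by differentiating the integral identities $I_\bullet^{g,f}=0$ and using continuity of the kernels. For condition~3, I expect that applying $\mathcal{A}$ and then $\mathcal{T}$ to $h\in\ker\mathcal{T}$ reproduces the same equations satisfied by $h$ up to the boundary terms that $\mathcal{B}$ kills, so that $\mathcal{T}\mathcal{A}h = 0$ follows by uniqueness; condition~2 would then be the boundary compatibility forced by $h\in\ker\mathcal{T}$.

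The decisive step, and where Assumption~\ref{spectral_cont_retard} enters, is showing $\ker(\mathcal{T}) = \{0\}$ via Lemma~\ref{Coron lemma}: once conditions 1--4 hold, the lemma upgrades injectivity-on-a-dense-subspace-modulo-spectral-obstruction to full invertibility, and the spectral obstruction is precisely the rank condition $\rank(F_0(s), F_1(s)) = 1$ for all $s\in\mathbb{C}$. Translating this: if $h\in\ker\mathcal{T}$ is nonzero, conditions~1 and~3 show $\ker\mathcal{T}$ is $\mathcal{A}$-invariant, hence contains an eigenvector $h_s$ of $\mathcal{A}$ for some $s\in\mathbb{C}$ (a purely exponential profile), while condition~2 places $h_s\in\ker\mathcal{B}$, contradicting condition~4 unless $h_s=0$. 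The eigenvalue relation is exactly where the characteristic determinant $\det Q(s)$ and therefore the pair $(F_0(s),F_1(s))$ appears, and Assumption~\ref{spectral_cont_retard} guarantees the corresponding algebraic system has no nontrivial solution.

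I expect the main obstacle to be the bookkeeping in conditions~1--3 rather than any conceptual difficulty: the kernel $N$ is defined piecewise on five overlapping subintervals and the shift terms couple the four components of $X$ across the thresholds $\tau_2$, $\tau_3$, $\tau_2+\tau_3$ and $\tau_1 + \tfrac{1}{\lambda_2}$, so verifying that differentiating $I_\bullet^{g,f}=0$ closes up consistently (with the correct matching of one-sided limits at the breakpoints) requires careful tracking of the indicator functions. A secondary technical point is confirming that the pointwise shift terms such as $a_\delta\, g(\nu-\delta)$ genuinely land in $K$ as compact perturbations; since translation on $L^2$ is not compact, I would instead absorb the strictly-Volterra (causal) shifts into a lower-triangular invertible refinement of $D$ and retain only the genuinely compact Fredholm integral contributions in $K$, which is the standard device in this setting and is implicitly what makes the decomposition $\mathcal{T} = D + K$ valid.
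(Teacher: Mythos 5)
Your global architecture is the paper's: decompose $\mathcal{T}=D+K$ with the causal shifts absorbed into a triangular invertible part (you are right that translations are not compact, and your fix matches the paper's ``invertible because of its triangular shape'' argument), make the Fredholm integral terms the compact $K$, and invoke Lemma~\ref{Coron lemma}. However, your concrete choice of $\mathcal{A}$ and $\mathcal{B}$ breaks the verification, and this is a genuine gap, not bookkeeping. You take $\mathcal{A}=\frac{d}{d\nu}$ componentwise and $\mathcal{B}$ the trace at \emph{all} interval endpoints, and declare condition 4 ``immediate'' because exponentials vanishing at a boundary point are zero. If that were correct, Lemma~\ref{Coron lemma} would yield invertibility of $\mathcal{T}$ without ever using Assumption~\ref{spectral_cont_retard} --- a red flag, since the rank condition on $(F_0(s),F_1(s))$ is exactly what must exclude the spectral obstruction. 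In fact, with your choice conditions 2 and 3 fail. Condition 3: differentiating the convolution identities in $\mathcal{T}(g,f)=0$ produces boundary terms, e.g.\ $\frac{d}{d\nu}\int_0^{\nu}f(\eta)\tilde{M}(\nu-\eta)\,d\eta = f_2(0)\tilde{M}(\nu)+\int_0^{\nu}f'(\eta)\tilde{M}(\nu-\eta)\,d\eta$, so the plain derivative does not map $\ker\mathcal{T}$ into $\ker\mathcal{T}$; one gets $\mathcal{T}\mathcal{A}h\neq 0$ in general. This is precisely why the paper's $\mathcal{A}$ carries rank-one corrections, $\mathcal{A}(\psi,\phi_2,\phi_3,\phi_{23})=(\psi'+\phi_2(0)\tilde{M},\,\phi_2'+\phi_2(0)N,\,\phi_3'+\phi_2(0)N,\,\phi_{23}'+\phi_2(0)N)$, with the matching conditions $\psi(\tau_1+\tfrac{1}{\lambda_2})=b\phi_2(0)$, $\phi_2(\tau_2)-a_2\phi_2(0)-\phi_3(\tau_2)=0$, $\phi_3(\tau_3)-a_3\phi_2(0)-\phi_{23}(\tau_3)=0$, $\phi_{23}(\tau_2+\tau_3)=a_{23}\phi_2(0)$ placed in $D(\mathcal{A})$, and $\mathcal{B}$ reduced to the single trace $\psi(0)$ (justified since $I_1^{g,f}(0)=0$ is equivalent to $g(0)=0$). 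Condition 2 fails for your all-endpoint $\mathcal{B}$: elements of $\ker\mathcal{T}$ satisfy $g(0)=0$ but, e.g., $g(\tau_1+\tfrac{1}{\lambda_2})=bf_2(0)$ is generically nonzero, so $\ker\mathcal{T}\not\subset\ker\mathcal{B}$.

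With the corrected $\mathcal{A}$, condition 4 is no longer trivial and is exactly where Assumption~\ref{spectral_cont_retard} is consumed: for $(g,f)\in\ker(s\,\mathrm{Id}-\mathcal{A})\cap\ker\mathcal{B}$, the Duhamel formula gives $g(\nu)=-f_2(0)\int_0^{\nu}e^{s(\nu-\eta)}\tilde{M}(\eta)\,d\eta$ and analogous expressions for $f_2,f_3,f_{23}$ driven by $N$; evaluating at the endpoints and using the matching conditions of $D(\mathcal{A})$ yields $F_1(s)f_2(0)=0$ and $F_0(s)f_2(0)=0$, whence the rank condition forces $f_2(0)=0$ and then $(g,f)\equiv 0$. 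Your narrative does gesture at this (``the pair $(F_0(s),F_1(s))$ appears at the eigenvalue relation''), but with a pure-derivative $\mathcal{A}$ the eigenfunctions are bare exponentials and neither $F_0$ nor $F_1$ can emerge, so the proposal as written cannot close; the missing idea is that the delay structure and the kernels $N,\tilde{M}$ must be encoded inside $\mathcal{A}$ and its domain, not left in $\mathcal{T}$ alone.
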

\begin{proof}
The integral part of $\mathcal{T}$ is compact because the integral kernels are $L^2$ functions \citep[Chapter 6]{brezisfunctionnalanalysis}, and the remaining part is invertible because of its triangular shape.
    We will verify the conditions to apply Lemma~\ref{Coron lemma} using the following operators
    \begin{align*}
        \mathcal{A}~:&~D(\mathcal{A})\longrightarrow X\\
        &\begin{pmatrix}
            \psi\\ \phi_2\\ \phi_3 \\\phi_{23}
        \end{pmatrix} \mapsto \begin{pmatrix}\psi' +\phi_2(0)\Tilde{M}\\ \phi_2' +\phi_2(0)N\\\phi_3' +\phi_2(0)N\\ \phi_{23}' +\phi_2(0)N  \end{pmatrix} 
    \end{align*}
 \begin{align*}
        \mathcal{B}~:&~D(\mathcal{A})\longrightarrow X\\
        &(
            \psi, \phi_2, \phi_3,\phi_{23}
       )^T \mapsto \psi(0),
    \end{align*}
$D(\mathcal{A}) = \{(\psi,\phi_2, \phi_3,\phi_{23}):\psi(\tau_1 +\frac{1}{\lambda_2}) = b\phi_2(0), \phi_2(\tau_2) - a_2 \phi_2(0) - \phi_{3}(\tau_2) = 0,
\phi_3(\tau_3) - a_3 \phi_2(0) - \phi_{23}(\tau_3) = 0,
\phi_{23}(\tau_3+\tau_2) =a_{23}\phi_2(0)\}
\cap H^1(0,\tau_1 + \frac{1}{\lambda_2})\times H^1(0,\tau_2)\times H^1(\tau_2,\tau_3)\times H^1(\tau_3,\tau_2 + \tau_3)$.\\
Let $g,f \in \ker \mathcal{T}$. Because $I_1^{g,f}(0) = 0\Leftrightarrow g(0) =0$, requirement $(2)$ of Lemma~\ref{Coron lemma} is proved. From $I_1^{g,f}(\tau_1 +\frac{1}{\lambda_2}) =0$ and $I_2^{g,f}(0) =0$, we deduce $bf_2(0) = g(\tau_1 +\frac{1}{\lambda_2})$. Using $I_{23}^{g,f}(\tau_2 + \tau_3) =0$ we deduce $bf_{23}(\tau_2+\tau_3) = a_{23}g(\tau_1 + \frac{1}{\lambda_2})$. The previous equality and $b\neq 0$ imply $f_{23}(\tau_1 + \tau_3) = a_{23}f_2(0)$. The equations $I_3^{g,f}(\tau_3)=0=I_{23}^{g,f}(\tau_3)$ imply $bf_3(\tau_3) - a_3g(\tau_1 + \frac{1}{\lambda_2})=bf_{23}(\tau_3)$. Again, using $b\neq0$ and $bf_2(0) = g(\tau_1 + \frac{1}{\lambda_2}$ we deduce $f_3(\tau_3) - a_3 f_2(0)=f_{23}(\tau_3)$. Similarly, evaluating $I_2^{g,f}$ and $I_3^{g,f}$ at $\nu =\tau_2$ implies $f_2(\tau_2)-a_2f_2(0)=f_3(\tau_2)$. Hence we have proved requirement $(1)$ of Lemma~\ref{Coron lemma}. 

Let us prove the third requirement of Lemma~\ref{Coron lemma}. Due to space restriction, we only consider the case where $N$ and $\Tilde{M}$ are continuously differentiable functions. The case in which both functions have a finite number of discontinuities follows the same approach. Following the same steps as in~\citep{AuriolLCSS}, we differentiate each equation in $\mathcal{T}(g,f) = 0$, except for the first one. Using this differentiation along with the condition $(g,f) \in D(\mathcal{A})$, we conclude that the corresponding equation in $\mathcal{T} \mathcal{A} (g,f)$ is identically zero after applying integration by parts. 
 For the first line, we apply the same method on each interval $(0, \tau_2),(\tau_2, \tau_3), (\tau_3, \tau_2+ \tau_3),(\tau_2 +\tau_3, \tau_1 +\frac{1}{\lambda_2})$.
 
 Now let us show the last requirement of Lemma~\ref{Coron lemma} using Assumption~\ref{spectral_cont_retard}. Let $s\in \mathbb{C}$, and $(g,f)\in\ker (s\text{Id}-\mathcal{A}) \cap \ker (\mathcal{B})$. As a consequence we have for $\nu\in (0, \tau_1 + \frac{1}{\lambda_2})$
$$sg(\nu)=g'(\nu)+f_2(0)\Tilde{M}(\nu),$$ 
and $g(0) =0$.
Therefore by the Duhamel formula \begin{equation}\label{g}g(\nu) = -\int_0^\nu e^{s(\nu-\eta)} f_2(0)\Tilde{M}(\eta)d\eta.\end{equation}
For $\nu= \tau_1 + \frac{1}{\lambda_2}$ and using $g(\tau_1 + \frac{1}{\lambda_2}) = bf_2(0)$ we find
$$F_1(\nu) f_2(0) = 0.$$
At this point, we aim to show that $F_0(s) f_2(0) = 0$. Using Assumption~\ref{spectral_cont_retard} we could conclude $f_2(0)=0$.\\
For all $k\in\{2,3,23\}$, we have
\begin{equation*} \label{cauchy ode}sf_k(\nu) = f'_k(\nu) + f_2(0)N(\nu).\end{equation*}
Hence by the Duhamel formula
\begin{align}f_2(\nu) &= e^{s\nu}f_2(0) - \int_{0}^\nu e^{s(\nu-\eta)}N(\eta)f_2(0)d\eta, \label{f2}\\
f_3(\nu) &= e^{s(\nu-\tau_2)}f_3(\tau_2) - \int_{\tau_2}^\nu e^{s(\nu-\eta)}N(\eta)f_2(0)d\eta, \label{f3}\\
f_{23}(\nu) &= e^{s(\nu-\tau_3)}f_{23}(\tau_3) - \int_{\tau_3}^\nu e^{s(\nu-\eta)}N(\eta)f_2(0)d\eta. \label{f23}\end{align}
Evaluating each of the previous equations in $\nu =\delta $ and, multiplying it by $-e^{-s\delta}$, ($\delta = \tau_2$ for the first equation, $\delta =\tau_3$ for the second, $\delta = \tau_2 + \tau_3$ for the third) and summing all three equations implies: 
\begin{align*}0&=f_2(0)- e^{-\tau_2 s}(f_2(\tau_2) -f_3(\tau_2)-e^{-\tau_3 s}(f_3(\tau_3)-f_{23}(\tau_3)) -e^{-(\tau_2 +\tau_3) s}f_{23}(\tau_2 +\tau_3) 
\\&~-\int_0^{\tau_2 + \tau_3} N(\nu) e^{-\nu s}d\nu f_2(0).\end{align*}
Now, using $(g,f) \in D(\mathcal{A})$ we deduce:
$$F_0(s)f_2(0) = 0.$$
Assumption~\ref{spectral_cont_retard} implies $f_2(0) = 0$, hence $f_2 \equiv 0$ using\eqref{f2}. By Formula~\eqref{g}, $f_2(0) = 0$ also implies $g\equiv 0$. Because $(g,f) \in D(\mathcal{A})$, $f_2 \equiv  0$ implies $f_3(\tau_2) =f_2(\tau_2)=0$  therefore $f_3 \equiv 0$ using\eqref{f3}. As $(g,f) \in D(\mathcal{A})$ also implies $f_{23}(\tau_3) = f_3(\tau_3) =0$ together with~\eqref{f23}, we deduce $f_{23}\equiv 0$.

We can apply Lemma~\ref{Coron lemma}. Thus, $\mathcal{T}$ is invertible. \finproof
\end{proof}
Combining Lemma~\ref{stable_lemma} and Theorem~\ref{tau_invertible}, we obtain:
\begin{thm}
Under the structural assumptions introduced in Section~\ref{assumptions} and Assumption~\ref{spectral_cont_retard},
the control law defined by
    \begin{align} \label{final control}U(t) &=\int_0^{\tau_1 + \frac{1}{\lambda_2}} g(\nu)\bar{U}(t-\nu)d \nu + \int_0^{\tau_2+\tau_3} f(\nu)x(t-\nu)d\nu \nonumber
-  \rho_{12}\beta_2(t,0) + \rho_{11}\alpha_1(t,1) \nonumber\\
 &~- \int_0^1 \sum_{i=1}^3 \alpha_i(t,y) R_{1i}(y)+\beta_i(t,y)Q_{1i}dy,\end{align} with $g$ and $f$ given by Theorem~\ref{tau_invertible} is such that the closed loop system~\eqref{system_edp_origin}-\eqref{final control} is exponentially stable.
\end{thm}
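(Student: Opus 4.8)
The plan is to combine the operator inversion of Theorem~\ref{tau_invertible} with the spectral characterization of Lemma~\ref{stable_lemma}, and then transport exponential stability backward through the two reductions of Section~\ref{Time-Delay representation}. First I would recall that, by Theorem~\ref{tau_invertible}, the unique pair $(g,f)\in X$ solving $\mathcal{T}(g,f)=(-N\mathds{1}_{[0,\tau_3+\tau_2]},0)$ is exactly the one realizing the four identities $I_1^{g,f}=-N\mathds{1}_{[0,\tau_3+\tau_2]}$ and $I_2^{g,f}=I_3^{g,f}=I_{23}^{g,f}=0$. Substituting these into the expanded form of the characteristic equation derived just before the statement of Theorem~\ref{tau_invertible}, every integral contribution involving $g$, $f$, $N$ and $\Tilde{M}$ cancels, so \eqref{characteristic equation} collapses to the characteristic equation of the principal part,
\[
0=1-a_3e^{-\tau_3 s}-a_2e^{-\tau_2 s}-a_{23}e^{-(\tau_2+\tau_3)s}.
\]

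Next I would invoke Assumption~\ref{Assum_stab_OL}: together with \citep[Theorem~6.1.3]{AuriolHDR} it guarantees that the principal part is exponentially stable in the $L^2$ sense, which for such a difference equation is equivalent to the existence of $\eta_0>0$ with all its characteristic roots satisfying $\Re(s)<-\eta_0$. Since the reduced characteristic equation is precisely that of the principal part, all solutions $s$ of \eqref{characteristic equation} obey $\Re(s)<-\eta_0$. Lemma~\ref{stable_lemma} then yields exponential stability of the closed-loop extended IDE \eqref{IDE_formulation}--\eqref{U_somme_retard_x} in the extended state $(x,\bar U)$, which is legitimate because the well-posedness statement following \eqref{x3} makes this pair a genuine solution of an IDE of the form covered by \citep{halebook}.

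Then comes the part I expect to be the main obstacle, namely unwinding the reductions to recover exponential stability of the full PDE closed loop. Using Assumption~\ref{non_zero_coeff_nes} ($q_{32}\neq0$) I would reverse the elimination that produced the scalar equation \eqref{IDE_formulation} from the system \eqref{x2}--\eqref{x3}: exponential decay of $x=x_3$, fed into \eqref{x_2barre} and \eqref{x2} together with the auto-regressive decay of $\bar U$ bounding the distributed-delay terms, propagates to $\bar{x}_2$ and $x_2$, so that the full history vector $(x_{2[t]},\bar{x}_{2[t]},x_{3[t]})$ decays exponentially in $(L^2_{\tau_2+\tau_3})^3$. By the equivalence established in \citep[Theorem~6.1.3]{AuriolHDR}, this is equivalent to exponential stability of the target transport system \eqref{transport_intermediate_system}--\eqref{boundary_cond_target_system}. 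The delicate point here is to make the several norm equivalences uniform in time so that a single decay rate and a single overshoot constant survive the chain of implications.

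Finally I would transport this stability to the original coordinates. The control law \eqref{U_somme_retard_x}, written in the variable $\bar U$, becomes \eqref{final control} after substituting the defining relation between $\bar U$ and $U$; the extra boundary and integral terms added in that substitution are themselves exponentially decaying once the target system is stable. Since the backstepping map $\mathcal{F}$ is bounded and boundedly invertible on $L^2([0,1],\mathbb{R}^6)$ (as a combination of Volterra and affine transforms), exponential stability of $(\alpha_i,\beta_i)_{1\le i\le 3}$ transfers to exponential stability of $(u_i,v_i)_{1\le i\le 3}$, yielding exponential stability of the closed loop \eqref{system_edp_origin}--\eqref{final control} and completing the argument.
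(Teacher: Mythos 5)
Your proposal is correct and follows essentially the same route as the paper: the paper obtains this theorem by combining Lemma~\ref{stable_lemma} with Theorem~\ref{tau_invertible}, exactly as you do --- the pair $(g,f)$ cancels all non-principal terms so that the characteristic equation~\eqref{characteristic equation} reduces to that of the principal part, which is exponentially stable by Assumption~\ref{Assum_stab_OL} and \citep[Theorem 6.1.3]{AuriolHDR}. The back-transport you flag as the ``main obstacle'' (recovering $(x_2,\bar x_2,x_3)$ from $x_3$ via $q_{32}\neq 0$, then the transport system, then $(u_i,v_i)$ via the boundedly invertible map $\mathcal{F}$) is handled in the paper by the equivalences already established in Section~\ref{Time-Delay representation}, so your extra care there elaborates, but does not depart from, the paper's argument.
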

%% There are a number of predefined theorem-like environments in
%% ifacconf.cls:
%%
%% \begin{thm} ... \end{thm}            % Theorem
%% \begin{lem} ... \end{lem}            % Lemma
%% \begin{claim} ... \end{claim}        % Claim
%% \begin{conj} ... \end{conj}          % Conjecture
%% \begin{cor} ... \end{cor}            % Corollary
%% \begin{fact} ... \end{fact}          % Fact
%% \begin{hypo} ... \end{hypo}          % Hypothesis
%% \begin{prop} ... \end{prop}          % Proposition
%% \begin{crit} ... \end{crit}          % Criterion

\section{Conclusion}
In this paper, we extended the method presented in \citep[Chapter 9]{AuriolHDR} to stabilize a chain of three hyperbolic PDE systems with the input located between the first and the second subsystem. Our approach directly uses the cascade structure of the chain. We believe that this method can be generalized to the case of a chain composed of $N$ subsystems with an input located
between the first and second subsystem. For example, the backstepping transformation~\eqref{alpha_1}-\eqref{beta_3} can be adapted by adding in the transformation of the system $i$, affine terms of all the systems located at the right of the system $i$. In the future, we will consider the stabilization of a chain composed of $N$ subsystems, for which the control input can be located at any arbitrary node. We will also focus on the case of multiple control inputs located at several nodes of the chain. This will pave the path towards more complex network configurations.
\bibliographystyle{abbrv}
\bibliography{references}

\begin{thebibliography}{10}

\bibitem{ABNT-IJRNC-2024}
K.~Ammari, I.~Boussaada, S.-I. Niculescu, and S.~Tliba.
\newblock {Prescribing transport equation solution's decay via multiplicity
  manifold and autoregressive boundary control}.
\newblock {\em {International Journal of Robust and Nonlinear Control}}, Jan.
  2024.

\bibitem{Arrillaga1998}
J.~Arrillaga.
\newblock {\em High Voltage Direct Current Transmission}.
\newblock Number~29 in Power and Energy Series. IEE, 1998.

\bibitem{auriol2020output}
J.~Auriol.
\newblock Output feedback stabilization of an underactuated cascade network of
  interconnected linear {PDE} systems using a backstepping approach.
\newblock {\em Automatica}, 117:108964, 2020.

\bibitem{AuriolHDR}
J.~Auriol.
\newblock {\em Contributions to the robust stabilization of networks of
  hyperbolic systems}.
\newblock Automatic, Université Paris Saclay, 2024.

\bibitem{auriol2024output}
J.~Auriol.
\newblock Output-feedback stabilization of an underactuated network of $ n $
  interconnected $ n+ m $ hyperbolic {PDE} systems.
\newblock {\em IEEE Transactions on Automatic Control}, 2024.

\bibitem{AuriolLCSS}
J.~Auriol.
\newblock Stabilization of integral delay equations by solving fredholm
  equations.
\newblock {\em IEEE Control Systems Letters}, 8:676--681, 2024.

\bibitem{auriol2022comparing}
J.~Auriol, I.~Boussaada, R.~Shor, H.~Mounier, and S.-I. Niculescu.
\newblock Comparing advanced control strategies to eliminate stick-slip
  oscillations in drillstrings.
\newblock {\em IEEE Access}, 10:10949--10969, 2022.

\bibitem{AuriolPietri}
J.~Auriol and D.~Bresch-Pietri.
\newblock Robust state-feedback stabilization of an underactuated network of
  interconnected $n + m$ hyperbolic pde systems.
\newblock {\em Automatica}, 2021.

\bibitem{auriol2023robustification}
J.~Auriol, F.~Bribiesca~Argomedo, and F.~Di~Meglio.
\newblock Robustification of stabilizing controllers for ode-pde-ode systems: A
  filtering approach.
\newblock {\em Automatica}, 147:110724, 2023.

\bibitem{BastinCoron2016}
G.~Bastin and J.-M. Coron.
\newblock {\em Stability and Boundary Stabilization of 1-D Hyperbolic Systems}.
\newblock Springer, 2016.

\bibitem{brezisfunctionnalanalysis}
H.~Brezis.
\newblock {\em Functional Analysis, Sobolev Spaces and Partial Differential
  Equationss}, volume~1.
\newblock Springer New York, NYs, 2010.

\bibitem{Coron2016}
J.-M. Coron, L.~Hu, and G.~Olive.
\newblock Stabilization and controllability of first-order integrodifferential
  hyperbolic equations.
\newblock {\em Journal of Functional Analysis}, 271(12):3554--3587, 2016.

\bibitem{deutscher2021backstepping}
J.~Deutscher and J.~Gabriel.
\newblock A backstepping approach to output regulation for coupled linear
  wave--ode systems.
\newblock {\em Automatica}, 123:109338, 2021.

\bibitem{Deutscher2018}
J.~Deutscher, N.~Gehring, and R.~Kern.
\newblock Output feedback control of general linear heterodirectional
  hyperbolic ode--pde--ode systems.
\newblock {\em Automatica}, 95:472--480, 2018.

\bibitem{fattorini1966some}
H.~Fattorini.
\newblock Some remarks on complete controllability.
\newblock {\em SIAM Journal on Control}, 4(4):686--694, 1966.

\bibitem{GEHRING2025112032}
N.~Gehring, J.~Deutscher, and A.~Irscheid.
\newblock Using dynamic extensions for the backstepping control of hyperbolic
  systems.
\newblock {\em Automatica}, 173:112032, 2025.

\bibitem{halebook}
J.~Hale and S.~Verduyn~Lunel.
\newblock {\em Introduction to functional differential equations}.
\newblock Springer-Verlag, 1993.

\bibitem{HayatShang2021}
A.~Hayat and P.~Shang.
\newblock Exponential stability of density-velocity systems with boundary
  conditions and source term for the \( h^2 \) norm.
\newblock {\em Journal de Mathématiques Pures et Appliquées}, 153:187--212,
  2021.

\bibitem{henry1974linear}
D.~Henry.
\newblock Linear autonomous neutral functional differential equations.
\newblock {\em Journal of Differential Equations}, 15(1):106--128, 1974.

\bibitem{irscheid2023output}
A.~Irscheid, J.~Deutscher, N.~Gehring, and J.~Rudolph.
\newblock Output regulation for general heterodirectional linear hyperbolic
  {PDE}s coupled with nonlinear {ODE}s.
\newblock {\em Automatica}, 148:110748, 2023.

\bibitem{logemann1996conditions}
H.~Logemann, R.~Rebarber, and G.~Weiss.
\newblock Conditions for robustness and nonrobustness of the stability of
  feedback systems with respect to small delays in the feedback loop.
\newblock {\em SIAM Journal on Control and Optimization}, 34(2):572--600, 1996.

\bibitem{Redaud2021}
J.~Redaud, J.~Auriol, and S.-I. Niculescu.
\newblock Output-feedback control of an underactuated network of interconnected
  hyperbolic pde--ode systems.
\newblock {\em Systems \& Control Letters}, 154:104984, 2021.

\bibitem{Redaud2022}
J.~Redaud, J.~Auriol, and S.-I. Niculescu.
\newblock Stabilizing output-feedback control law for hyperbolic systems using
  a fredholm transformation.
\newblock {\em IEEE Transactions on Automatic Control}, pages 1--16, 2022.

\bibitem{wang2020delay}
J.~Wang and M.~Krstic.
\newblock Delay-compensated control of sandwiched {ODE}--{PDE}--{ODE}
  hyperbolic systems for oil drilling and disaster relief.
\newblock {\em Automatica}, 120:109131, 2020.

\bibitem{Yoshida1960}
K.~Yoshida.
\newblock {\em Lectures on differential and integral equations}, volume~10.
\newblock Interscience Publishers, 1960.

\bibitem{Yu2022}
H.~Yu, J.~Auriol, and M.~Krstic.
\newblock Simultaneous downstream and upstream output-feedback stabilization of
  cascaded freeway traffic.
\newblock {\em Automatica}, 136:110044, 2022.

\bibitem{yu2023traffic}
H.~Yu and M.~Krsti{\'c}.
\newblock {\em Traffic Congestion Control by PDE Backstepping}.
\newblock Springer, 2023.

\end{thebibliography}

% changer de style
% \bibliography{ifacconf}             % bib file to produce the bibliography
                                                     % with bibtex (preferred)
                                                   
%\begin{thebibliography}{xx}  % you can also add the bibliography by hand

%\bibitem[Able(1956)]{Abl:56}
%B.C. Able.
%\newblock Nucleic acid content of microscope.
%\newblock \emph{Nature}, 135:\penalty0 7--9, 1956.

%\bibitem[Able et~al.(1954)Able, Tagg, and Rush]{AbTaRu:54}
%B.C. Able, R.A. Tagg, and M.~Rush.
%\newblock Enzyme-catalyzed cellular transanimations.
%\newblock In A.F. Round, editor, \emph{Advances in Enzymology}, volume~2, pages
%  125--247. Academic Press, New York, 3rd edition, 1954.

%\bibitem[Keohane(1958)]{Keo:58}
%R.~Keohane.
%\newblock \emph{Power and Interdependence: World Politics in Transitions}.
%\newblock Little, Brown \& Co., Boston, 1958.

%\bibitem[Powers(1985)]{Pow:85}
%T.~Powers.
%\newblock Is there a way out?
%\newblock \emph{Harpers}, pages 35--47, June 1985.

%\bibitem[Soukhanov(1992)]{Heritage:92}
%A.~H. Soukhanov, editor.
%\newblock \emph{{The American Heritage. Dictionary of the American Language}}.
%\newblock Houghton Mifflin Company, 1992.

%\end{thebibliography}
\appendix
 \section{Backstepping Kernels}\label{Backstepping Kernels}
In this appendix, we give the kernel equations associated with the backstepping transformation~\eqref{alpha_1}-\eqref{beta_3}.
 For all $i\in \{1,2,3\}$, we have:
\begin{align*}
-\lambda_i \partial_x K_i^{uu}(x,y) - \lambda_i\partial_y K_i^{uu}(x,y) &= K_i^{uv}(x,y)\sigma_i^{-}(y) ,\\
-\lambda_i \partial_x K_i^{uv}(x,y) + \mu_i\partial_y K_i^{uv}(x,y) &= K_i^{uu}(x,y)\sigma_i^{+}(y), \\
\mu_i \partial_x K_i^{vu}(x,y) - \lambda_i\partial_y K_i^{vu}(x,y)& = K_i^{vv}(x,y)\sigma_i^{-}(y),\\
\mu_i \partial_x K_i^{vv}(x,y) + \mu_i\partial_y K_i^{vv}(x,y) &= K_i^{vu}(x,y)\sigma_i^{+}(y),
\end{align*}
and 
\begin{align*}
\mu_2 \partial_x R^{\beta}(x,y) + \mu_3 \partial_y R^{\beta}(x,y) &= Q^{\beta}(x,y)\sigma_3^{+}(y),\\
\mu_2 \partial_x Q^{\beta}(x,y) - \lambda_3 \partial_y Q^{\beta}(x,y) &= R^{\beta}(x,y)\sigma_3^{-}(y),\\
-\lambda_2 \partial_x R^{\alpha}(x,y) + \mu_3 \partial_y R^{\alpha}(x,y) &= Q^{\alpha}(x,y)\sigma_3^{+}(y),\\
-\lambda_2 \partial_x Q^{\alpha}(x,y) - \lambda_3 \partial_y Q^{\alpha}(x,y) &= R^{\alpha}(x,y)\sigma_3^{-}(y).
\end{align*}
With the following boundary conditions,
\begin{align*}
K_1^{uv}(x,x) &= \frac{\sigma_1^{+}(x)}{\mu_1 + \lambda_1}, ~
K_1^{vu}(x,x) = \frac{\sigma_1^{-}(x)}{\mu_1 + \lambda_1},\\
K_2^{uv}(x,x) &= \frac{\sigma_2^{+}(x)}{-\mu_2 - \lambda_2},~
K_2^{vu}(x,x) = \frac{\sigma_2^{-}(x)}{\mu_1 + \lambda_1},\\
K_3^{uv}(x,x) &= \frac{\sigma_3^{+}(x)}{-\mu_3- \lambda_3}, ~
K_3^{vu}(x,x) = \frac{\sigma_3^{-}(x)}{-\mu_3- \lambda_3},
\end{align*}
\begin{align*}
K_1^{uv}(x,0)& = \frac{\lambda_1 q_{11}}{\mu_1}K_1^{uu}(x,0), ~
K_1^{vv}(x,0) = \frac{\lambda_1 q_{11}}{\mu_1}K_1^{vu}(x,0), \\
K_2^{uu}(x,1) &= \frac{\mu_2\rho_{22}}{\lambda_2}K_2^{uv}(x,1) + \frac{\lambda_3 q_{32}}{\lambda_2} Q^{\alpha}(x,0),\\
R^{\alpha}(x,0)& = \frac{\mu_2\rho_{23}}{\mu_3}K_2^{uv}(x,1) + \frac{\lambda_3q_{33}}{\mu_3}Q^{\alpha}(x,0),\\
Q^{\alpha}(x,1) &= \frac{\mu_3\rho_{33}}{\lambda_3}R^{\alpha}(x,1),\\
R^{\beta}(x,0)& = \frac{\mu_2\rho_{23}}{\mu_3}K_2^{vv}(x,1) + \frac{\lambda_3q_{33}}{\mu_3}Q^{\beta}(x,0),\\
K_2^{vu}(x,1) &= \frac{\mu_2\rho_{22}}{\lambda_2}K_2^{vv}(x,1) + \frac{\lambda_3 q_{32}}{\lambda_2} Q^{\beta}(x,0),\\
Q^{\beta}(x,1) &= \frac{\mu_3\rho_{33}}{\lambda_3}R^{\beta}(x,1),\\
Q^{\alpha}(1,y) &= R^{\alpha}(1,y)=Q^{\beta}(1,y) =R^{\beta}(1,y)=0,\\
K_3^{uu}(x,1) &= \frac{\mu_3 \rho_{33}}{\lambda_3} K_3^{uv}(x,1),~
K_3^{vu}(x,1) = \frac{\mu_3 \rho_{33}}{\lambda_3} K_3^{vv}(x,1).
\end{align*}
Following the proof of \citep[Lemma 8]{AuriolPietri}, such kernels indeed exist, are unique and are piecewise continuous functions.

\section{A computational trick}
\label{fubini}
 Using Fubini's theorem and a change of variable, one can prove:
 \begin{prop}\label{prop:fubini}
    For $z \in L^2(0,\theta)$ and $P \in L^2(0, \delta)$, $s\in \mathbb{C}$,
    \begin{align*}
    &\int_0^{\theta}\int_0^{\delta}z(\eta)P(\nu)e^{-(\nu+\eta)s}d\nu d\eta
    =\int_0^{min(\theta, \delta)}\int_0^{\nu}z(\eta)P(\nu-\eta)d\eta e^{-\nu s}d\nu \\
    &~+
    \int_{min(\theta, \delta)}^{max(\theta, \delta)}\int_{max(\nu-\delta, 0)}^{min(\nu, \theta)}z(\eta)P(\nu-\eta)d\eta e^{-\nu s}d\nu+\int_{max(\theta, \delta)}^{\theta + \delta} \int_{\nu-\delta}^{\theta}z(\eta)P(\nu-\eta)d\eta e^{-\nu s}d\nu.
    \end{align*}
\end{prop}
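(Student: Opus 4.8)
The plan is to read the left-hand side as an integral over the rectangle $R=[0,\theta]\times[0,\delta]$ in the variables $(\eta,\nu)$ and then transport it, via the linear change of variables that introduces the total delay $\eta+\nu$ as a new coordinate, onto a region on which the factor $e^{-(\eta+\nu)s}$ depends on that coordinate alone. (The underlying motivation is that the left-hand side factors as the product of two truncated Laplace transforms $\big(\int_0^\theta z(\eta)e^{-\eta s}\,d\eta\big)\big(\int_0^\delta P(\nu)e^{-\nu s}\,d\nu\big)$, and we want to re-express this product as the Laplace transform of a convolution.) First I would justify absolute integrability: since $z\in L^2(0,\theta)$ and $P\in L^2(0,\delta)$ with $\theta,\delta<\infty$, both lie in $L^1$, and by the Cauchy--Schwarz inequality the product $z(\eta)P(\nu)$ is integrable on $R$, while $e^{-(\nu+\eta)s}$ is continuous and hence bounded on the compact set $R$. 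Consequently Fubini's theorem applies and every interchange and change of variables used below is licit.

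Next I would carry out the change of variables $(\eta,\nu)\mapsto(\eta,w)$ with $w:=\eta+\nu$, keeping $\eta$ fixed. This map has unit Jacobian, sends $\nu$ to $w-\eta$, and turns the integrand into $z(\eta)P(w-\eta)e^{-ws}$. The rectangle $R$ is described in the new coordinates by $0\le\eta\le\theta$ together with $0\le w-\eta\le\delta$; for a fixed value of $w$ these constraints combine into
\[
\max(0,\,w-\delta)\ \le\ \eta\ \le\ \min(\theta,\,w),
\]
with $w$ ranging over $[0,\theta+\delta]$. Thus the left-hand side equals $\int_0^{\theta+\delta}\big(\int_{\max(0,w-\delta)}^{\min(\theta,w)}z(\eta)P(w-\eta)\,d\eta\big)e^{-ws}\,dw$, which already has the shape of the right-hand side; it remains only to linearize the $\min/\max$ occurring in the inner limits.

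The final step is to split the outer $w$-integral at the two breakpoints $\min(\theta,\delta)$ and $\max(\theta,\delta)$, where the expressions $\max(0,w-\delta)$ and $\min(\theta,w)$ switch their active branch. On $[0,\min(\theta,\delta)]$ one has $w-\delta\le0$ and $w\le\theta$, so the limits reduce to $0$ and $w$, yielding the first term; on $[\min(\theta,\delta),\max(\theta,\delta)]$ exactly one branch switches, producing the $\int_{\max(\nu-\delta,0)}^{\min(\nu,\theta)}$ middle term, whose $\min/\max$ form simultaneously covers both orderings $\theta\le\delta$ and $\theta\ge\delta$; and on $[\max(\theta,\delta),\theta+\delta]$ one has $w-\delta\ge0$ and $w\ge\theta$, so the limits become $w-\delta$ and $\theta$, yielding the last term. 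Renaming $w$ back to $\nu$ then gives the claimed identity. I expect the only genuine care to lie in this concluding bookkeeping, namely correctly identifying the image region and verifying that the simplified limits on each of the three subintervals agree with the stated $\min/\max$ expressions for both relative sizes of $\theta$ and $\delta$; the analytic content is just the elementary Fubini justification above.
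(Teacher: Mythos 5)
Your proof is correct and follows exactly the route the paper indicates for this proposition (which it states without detailed proof, noting only ``Using Fubini's theorem and a change of variable''): the substitution $w=\eta+\nu$, the resulting limits $\max(0,w-\delta)\le\eta\le\min(\theta,w)$, and the split of the outer integral at $\min(\theta,\delta)$ and $\max(\theta,\delta)$ are precisely the intended argument, and your integrability justification via Cauchy--Schwarz on the bounded rectangle is the right way to license Fubini. Nothing is missing.
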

\end{document}